\theoremstyle{plain}
\newtheorem{theorem}{Theorem}[section]
\newtheorem{claim}[theorem]{Claim}
\newtheorem{lemma}[theorem]{Lemma}
\newtheorem{corollary}[theorem]{Corollary}
\newtheorem{conjecture}[theorem]{Conjecture}
\newtheorem{proposition}[theorem]{Proposition}
\newtheorem{problem}[theorem]{Problem}
\theoremstyle{definition} 
\newtheorem{remark}[theorem]{Remark}
\begin{document}
\date{\today}
\title{Avoiding and extending partial edge colorings of hypercubes\footnote{
This paper is partially based on the Bachelor thesis of Johansson written
under the supervision of Casselgren.}}

\author{
{\sl Carl Johan Casselgren}\footnote{Department of Mathematics, Link\"oping University, SE-581 83 Link\"oping, Sweden. 
{\it E-mail address:} carl.johan.casselgren@liu.se.
\, Casselgren was supported by a grant from the
Swedish Research Council (2017-05077)}
\and {\sl Per Johansson}\footnote{Department of Mathematics, Link\"oping University, SE-581 83 Link\"oping, Sweden. 
{\it E-mail address:} perjo018@student.liu.se  }
\and {\sl Klas Markstr\"om}\footnote{Department of Mathematics, 
Ume\aa\enskip University, 
SE-901 87 Ume\aa, Sweden.
{\it E-mail address:} klas.markstrom@umu.se
}
}
\maketitle

\bigskip
\noindent
{\bf Abstract.}
	We consider the problem of extending and avoiding partial edge colorings
	of hypercubes; that is, given a partial edge coloring $\varphi$ of 
	the $d$-dimensional hypercube $Q_d$,
	we are interested in whether there is a proper $d$-edge coloring of $Q_d$
	that {\em agrees} with the coloring $\varphi$ on every edge
	that is colored under $\varphi$; or, similarly, if there is a proper
	$d$-edge coloring that {\em disagrees} with $\varphi$ on every edge
	that is colored under $\varphi$.
	In particular, we prove that
	for any $d\geq 1$, 
	if $\varphi$ is a partial $d$-edge coloring of $Q_d$, then
	$\varphi$ is avoidable if
	every color appears on at most $d/8$ edges and 
	the coloring satisfies a relatively
	mild structural condition, or $\varphi$ is
	proper and every color appears on at most $d-2$ edges. 
	We also show that the same conclusion holds if $d$ is divisible by
	$3$ and every color class of
	$\varphi$ is an induced matching.	
	Moreover, for all $1 \leq k \leq d$, we characterize
	for which configurations consisting of a partial coloring 
	$\varphi$ of $d-k$ edges 
	and a partial coloring $\psi$ of $k$ edges, 
	there is an extension
	of $\varphi$ that avoids $\psi$.

\bigskip

\noindent
\small{\emph{Keywords: Edge coloring, hypercube, precoloring extension,
avoiding edge coloring}}

\section{Introduction}

	An {\em edge precoloring} (or {\em partial edge coloring}) 
	of a graph $G$ is a proper edge coloring of some 
	subset $E' \subseteq E(G)$; {\em a $t$-edge precoloring}
	is such a coloring with $t$ colors.	
		A $t$-edge precoloring $\varphi$ is
	{\em extendable} if there is a proper $t$-edge coloring $f$
	such that $f(e) = \varphi(e)$ for any edge $e$ that is colored
	under $\varphi$; $f$ is called an {\em extension}
	of $\varphi$. 
	
	Related to the notion of extending a precoloring is the idea of
	{\em avoiding} a precoloring: if $\varphi$ is an edge precoloring
	of a graph $G$, then a proper edge coloring $f$ of $G$ {\em avoids}
	$\varphi$ if $f(e) \neq \varphi(e)$ for every $e \in E(G)$.
	More generally, if $L$ is a list assignment for the edges of a graph $G$,
	then a proper  edge coloring $\varphi$ of $G$ {\em avoids}
	the list assignment $L$ if $\varphi(e) \notin L(e)$ for 
	every edge $e$ of $G$.
	
	In general, the problem of extending a given edge precoloring
	is an $\mathcal{NP}$-complete problem, 
	already for $3$-regular bipartite graphs \cite{Colbourn, Fiala}.
	One of the earlier references explicitly discussing the problem of 
	extending a partial  edge coloring is \cite{MarcotteSeymour}; there a 
	necessary condition for the existence of an extension is 
	given and the authors  find a class  of graphs where this 
	condition is also sufficient.
	More recently, questions on extending and avoiding a
	precolored matching have been studied in
	\cite{EGHKPS,GiraoKang}.
	In particular, in \cite{EGHKPS}
	it is proved that if $G$ is subcubic or bipartite
	and $\varphi$ is an edge precoloring of a matching $M$ in $G$
	using $\Delta(G)+1$ colors, then $\varphi$ 
	can be extended to a proper $(\Delta(G)+1)$-edge coloring of $G$,
	where $\Delta(G)$ as usual denotes the maximum degree of $G$;
	a similar result on avoiding a precolored matching of a general graph
	is obtained as well.
	Moreover, in \cite{GiraoKang} it is proved that
	if $\varphi$ is an $(\Delta(G)+1)$-edge precoloring of a 
	distance-$9$ matching in
	any graph $G$, then $\varphi$ can be extended
	to a proper $(\Delta(G)$+1)-edge coloring of $G$; here, by a 
	{\em distance-9-matching} we mean a matching $M$ where the distance between
	any two edges in $M$ is at least $9$; the {\em distance} between two
	edges $e$ and $e'$ is the number of edges contained in a shortest path 
	between an endpoint of $e$, and an endpoint of $e'$. 
	A distance-$2$ matching is usually called an
	{\em induced matching}.

	Questions on extending and avoiding partial edge colorings
	have specifically been studied to a large extent for balanced complete
	bipartite graphs, usually formulated in terms of completing partial
	Latin squares and avoiding arrays, respectively.
	In this form, the problem goes back to the famous Evans conjecture
	\cite{Evans} which states that for every positive integer $n$, 
	if $n-1$ edges in the complete bipartite graph 
	$K_{n,n}$ have been (properly) colored, 
	then this partial coloring can be extended to a
	proper $n$-edge coloring of $K_{n,n}$.
	This conjecture was solved for large $n$ by H\"aggkvist \cite{Haggkvist78}
	and later for all $n$ by Smetaniuk \cite{Smetaniuk}, 
	and independently by Andersen and Hilton \cite{AndersenHilton}.

	The problem of avoiding partial edge colorings (and list assignments) of
	complete bipartite graphs was introduced by
	Häggkvist \cite{Haggkvist}
	and has been further studied in e.g 
	\cite{AndrenEven, AndrenOdd, Casselgren}.
	In particular, by results of \cite{Cavenagh, ChetwyndRhodes, Ohman},
	any partial proper $n$-edge coloring of $K_{n,n}$ is avoidable,
	given that $n \geq 4$.
	Moreover, a conjecture first stated by
	Markstr\"om 
	suggests that if $\varphi$ is a partial $n$-edge coloring
	of $K_{n,n}$, where any color appears on at most $n-2$ edges, then $\varphi$
	is avoidable (see e.g. \cite{Casselgren}).
	In \cite{Casselgren},
	several partial results towards this conjecture are obtained;
	in particular, it is proved that the conjecture holds if each color
	appears on at most $n/5$ edges, or if the graph is colored
	by altogether at most $n/2$ colors.
	
	Combining the notion of extending a precoloring and avoiding
	a list assignment, Andren et al. \cite{AndrenCasselgrenMarkstrom}
	proved that 
	every ``sparse'' partial edge
	coloring of $K_{n,n}$ can be extended 
	to a proper $n$-edge coloring avoiding a given 
	list assignment $L$ satisfying certain ``sparsity'' conditions, 
	provided that no edge $e$ is precolored by a color
	that appears in $L(e)$; we refer to \cite{AndrenCasselgrenMarkstrom} 
	for the exact definition of ``sparse'' in this context.
	An analogous result for
	complete graphs was recently obtained in \cite{CasselgrenPham}.

	The study of problems on extending and avoiding partial edge 
	colorings of hypercubes
	was recently initiated in the papers 
	\cite{CasselgrenMarkstromPham, CasselgrenMarkstromPham2}. In
	\cite{CasselgrenMarkstromPham} Casselgren et al 
	obtained several analogues for hypercubes of classic results
	on completing partial Latin squares, such as the famous Evans conjecture.
	Moreover, questions on extending a ``sparse'' precoloring of a hypercube
	subject to the condition that the extension should avoid a given
	``sparse'' list assignment were investigated in 
	\cite{CasselgrenMarkstromPham2}.

	\bigskip

In this paper we continue the work on extending and avoiding partial edge colorings
of hypercubes, with a particular focus on the latter variant. 
We obtain a number of results towards an analogue for hypercubes of 
Markstr\"om's aforementioned
conjecture for complete bipartite graphs (see Conjecture \ref{conj:general}), 
and also prove several related results;
in particular, we prove the following.

\begin{itemize}

	\item For any $d\geq 1$, if $\varphi$ is a partial $d$-edge coloring of
	$Q_d$ where every color appears on at most $d/8$ edges, and $\varphi$
	satisfies
	a structural condition (described in Theorem \ref{th:gen} below),	
	then $\varphi$ is avoidable;
	
	\item for any $d \geq 1$, if $\varphi$ is a partial proper $d$-edge coloring
	of $Q_d$ where every color appears on at most $d-2$ edges, then $\varphi$
	is avoidable;
	
	\item if $d=3k$, where $k \geq 1$ is a positive integer, and every
	color class of the partial $d$-edge coloring $\varphi$ of $Q_d$ 
	is an induced matching, then $\varphi$ is avoidable;
	we conjecture that this holds for any $d \geq 1$;
	
	\item for any $d\geq 1$ and any $1 \leq k \leq d$, we characterize
	for which configurations consisting of a partial coloring 
	$\varphi$ of $d-k$ edges and a partial coloring $\psi$ of $k$ edges, 
	there is an extension
	of $\varphi$ that avoids $\psi$.

\end{itemize}

\section{Preliminaries}

	In this paper, all (partial) $d$-edge colorings use colors $1,\dots, d$
	unless otherwise stated.
	If $\varphi$ is an edge precoloring of $G$,
	and an edge $e$ is colored under $\varphi$,
	then we say that $e$ is {\em $\varphi$-colored}.

	If $\varphi$ is a (partial) proper $t$-edge coloring of $G$
	and $1 \leq a,b \leq t$, then a path or cycle in
	$G$ is called {\em $(a,b)$-colored under $\varphi$} if
	its edges are colored by colors $a$ and $b$ alternately.
	We also say that such a path or cycle is \emph{bicolored under $\varphi$}.
	By switching colors
	$a$ and $b$ on a maximal $(a,b)$-colored path or an $(a,b)$-colored cycle,
	we obtain another proper $t$-edge coloring of $G$;
	this operation is called an {\em interchange}.
	We denote by $\varphi^{-1}(i)$ the set of edges colored $i$ under $\varphi$.
	
	In the above definitions, we often leave out the explicit 
	reference to a
	coloring $\varphi$, if the coloring
	is clear from the context.

	\bigskip
	
	Havel and Moravek \cite{HavelMoravek} (see also \cite{Harary})
	proved a criterion
	for a graph $G$ to be a subgraph of a hypercube:

\begin{proposition}
\label{prop:HavelMoravek}
	A graph $G$ is a subgraph of $Q_d$ if and only if 
	there is a proper $d$-edge coloring of 
	$G$ with integers $\{1,\dots,d\}$
	such that
	\begin{itemize}
	%
	\item[(i)] in every path of $G$ there is some color that appears an odd
	number of times;
	
	\item[(ii)] in every cycle of $G$ no color appears an odd number of times.
	\end{itemize}
\end{proposition}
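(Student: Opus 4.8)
The plan is to prove the two directions using the canonical coordinate colouring of the hypercube. Recall that $V(Q_d)=\{0,1\}^d$, that two vertices are adjacent precisely when they differ in a single coordinate, and that colouring each edge by the coordinate in which its endpoints differ yields a proper $d$-edge-colouring in which an edge of colour $j$ ``toggles'' the $j$-th coordinate. The forward direction reads this off directly, and the converse reconstructs the coordinates from colour parities.

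For the forward direction, suppose $G\subseteq Q_d$ and give $G$ the restriction of the canonical colouring. Along any walk from $u$ to $v$, the number of traversed edges of colour $j$ has the same parity as $u_j\oplus v_j$, since each colour-$j$ edge flips coordinate $j$. On a path we have $u\neq v$, so some coordinate differs and hence some colour occurs an odd number of times, giving (i); on a cycle the endpoints coincide, so every coordinate is flipped an even number of times and no colour occurs an odd number of times, giving (ii).

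For the converse I would assume $G$ is connected (the setting in which the criterion is applied), fix a root $v_0$, and define $f\colon V(G)\to\{0,1\}^d$ by letting the $j$-th coordinate of $f(v)$ be the parity of the number of colour-$j$ edges on a chosen walk from $v_0$ to $v$. Three points must be verified. First, $f$ is well defined: for two walks $W,W'$ from $v_0$ to $v$, the set of edges traversed an odd number of times by the closed walk $W\overline{W'}$ forms an even subgraph, which decomposes into edge-disjoint cycles; applying (ii) to each cycle and summing the colour counts over the decomposition modulo $2$ shows that every colour is traversed an even number of times along $W\overline{W'}$, so the two parity vectors agree. Second, $f$ is edge-preserving: if $uv$ has colour $j$, appending this edge to a walk to $u$ flips exactly the $j$-th parity, so $f(u)$ and $f(v)$ differ in precisely coordinate $j$ and are adjacent in $Q_d$. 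Third, $f$ is injective: if $f(u)=f(v)$ with $u\neq v$, a path $P$ from $u$ to $v$ (which exists by connectivity) would have every colour occurring an even number of times, contradicting (i). Thus $f$ realises $G$ as a subgraph of $Q_d$.

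The step I expect to require the most care is the well-definedness of $f$, since this is exactly where (ii) enters and where one must pass from simple cycles, to which (ii) literally applies, to arbitrary closed walks via the $\mathbb{F}_2$ cycle-space decomposition; the remaining points are bookkeeping about parities. I would also note that the hypotheses force $|V(G)|\le 2^d$ for connected $G$ automatically, as a byproduct of the injectivity of $f$, which is why connectivity is the natural setting for the statement.
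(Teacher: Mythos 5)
The paper does not prove this proposition at all: it is quoted as a known result of Havel and Mor\'avek \cite{HavelMoravek} (see also \cite{Harary}), so there is no in-paper argument to compare against. Your proof is the standard one for this criterion and it is correct. The forward direction via the coordinate colouring is immediate, and in the converse the three verifications (well-definedness of the parity map $f$ via the decomposition of an even subgraph into edge-disjoint cycles together with (ii), adjacency-preservation from properness of the colouring, and injectivity from (i)) are exactly the right points and are each argued soundly. Your explicit restriction to connected $G$ is not a cosmetic convenience but a genuine necessity: two disjoint copies of $Q_d$, each with its standard colouring, satisfy (i) and (ii) yet have $2^{d+1}$ vertices and so cannot embed in $Q_d$; the criterion as literally stated in the paper is therefore only correct for connected graphs, which is the setting in which the paper applies it (to connected subgraphs spanned by collections of dimensional matchings). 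The only further remark worth making is that injectivity of $f$ on vertices already guarantees that distinct edges map to distinct edges, so $f$ does realise $G$ as a subgraph and not merely as a homomorphic image.
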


	A {\em dimensional matching} $M$ of $Q_d$ is a perfect matching of $Q_d$
	such that $Q_d- M$ is isomorphic to two copies of $Q_{d-1}$; evidently
	there are precisely $d$ dimensional matchings in $Q_d$.
	We state this as a lemma.
	
	\begin{lemma}
	\label{lem:matchings}
		Let $d \geq 2$ be an integer.
		Then there are $d$ different dimensional matchings in $Q_d$;
		indeed $Q_d$ decomposes into $d$ such perfect matchings.
	\end{lemma}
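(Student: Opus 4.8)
The plan is to exhibit the $d$ dimensional matchings explicitly and check that they partition the edge set. Identify $V(Q_d)$ with $\{0,1\}^d$, two vertices being adjacent precisely when they differ in a single coordinate; for an edge $e=xy$ let its \emph{direction} $\delta(e)\in\{1,\dots,d\}$ be the unique coordinate in which $x$ and $y$ differ. For each $i\in\{1,\dots,d\}$ set $M_i=\{e\in E(Q_d):\delta(e)=i\}$. Since every vertex $x$ has exactly one neighbour obtained by flipping coordinate $i$, each $M_i$ is a perfect matching; and since every edge has exactly one direction, the sets $M_1,\dots,M_d$ are pairwise disjoint, pairwise distinct, and satisfy $E(Q_d)=M_1\cup\dots\cup M_d$. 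This already yields the decomposition asserted in the lemma, provided each $M_i$ is a dimensional matching.

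Next I would verify $Q_d-M_i\cong Q_{d-1}\sqcup Q_{d-1}$. Partition the vertices by the value of coordinate $i$ into $V^0=\{x:x_i=0\}$ and $V^1=\{x:x_i=1\}$. Every edge of direction different from $i$ joins two vertices agreeing in coordinate $i$, while the edges of direction $i$ are exactly those of $M_i$; hence $Q_d-M_i$ has no edge between $V^0$ and $V^1$. Forgetting coordinate $i$ identifies each of $V^0,V^1$ with $\{0,1\}^{d-1}$, and under this identification the surviving edges are exactly those flipping a single one of the remaining $d-1$ coordinates, so the two parts induce copies of $Q_{d-1}$. Thus each $M_i$ is a dimensional matching, and together with the previous paragraph this proves the lemma.

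To justify that these are the \emph{only} dimensional matchings (so that there are precisely $d$), I would show an arbitrary dimensional matching $M$ coincides with some $M_i$. Write $A,B$ for the two components of $Q_d-M$, each isomorphic to $Q_{d-1}$, and note every vertex meets exactly one edge of $M$. The main obstacle is to rule out edges of $M$ lying \emph{inside} a component. For this I would combine a degree count over $V(A)$, which gives $2m_A+m_{AB}=2^{d-1}$ where $m_{AB}$ counts the edges of $M$ between $A$ and $B$ and $m_A$ those inside $A$, with the classical edge-isoperimetric inequality for the hypercube: since $|V(A)|=2^{d-1}$, the edge boundary satisfies $m_{AB}\geq 2^{d-1}$. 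Together these force $m_A=0$, so every edge of $M$ joins $A$ to $B$.

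With no internal edges, the surviving edges at a vertex stay inside its component, so each vertex $v$ has a unique cross edge, namely its $M$-edge, of some direction $\delta(v)$; it remains to show $\delta$ is constant. For adjacent $v$ and $w=v+e_j$ in the same component the edge $vw$ survives, so $p:=\delta(v)\neq j$; consider the square $v,\,v+e_p,\,v+e_p+e_j,\,v+e_j$. Here $v+e_p\in B$ with $M$-partner $v$, so $u:=v+e_p+e_j$ cannot lie in $A$ (else the cross edge $(v+e_p)u$ of direction $j$ would lie in $M$, contradicting that the $M$-partner of $v+e_p$ is $v$); hence $u\in B$, and the edge $wu$ of direction $p$ is a cross edge, i.e.\ $w$'s $M$-edge, giving $\delta(w)=p=\delta(v)$. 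As $A$ and $B$ are connected, $\delta\equiv i$ for a single $i$, whence $M=M_i$. I expect the isoperimetric/no-internal-edge step to be the crux, with everything else routine.
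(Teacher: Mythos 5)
The paper gives no proof of this lemma at all: it is introduced with the remark that ``evidently there are precisely $d$ dimensional matchings in $Q_d$'' and then simply recorded, so there is nothing of the authors' to compare your argument against. Your proof is correct, and it supplies the only genuinely nontrivial content, namely the uniqueness hidden in the phrase ``there are $d$ different dimensional matchings''; the first two paragraphs (the coordinate matchings $M_i$ partition $E(Q_d)$ and each is dimensional) are routine and fine. In the uniqueness part the one step you leave implicit is that $m_{AB}$ really is the full edge boundary $e(V(A),V(B))$: this holds because $A$ and $B$ are the \emph{components} of $Q_d-M$, so every $Q_d$-edge joining them must lie in $M$. Granting that, the count $2m_A+m_{AB}=2^{d-1}$ together with the edge-isoperimetric bound $e(S,S^c)\ge 2^{d-1}$ for $|S|=2^{d-1}$ does force $m_A=0$, and your $4$-cycle argument that the direction $\delta$ is locally constant on the connected subgraph $A$ is sound, giving $M\subseteq M_i$ and then $M=M_i$ by cardinality. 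The isoperimetric inequality is a heavier tool than the lemma's ``evidently'' suggests; an equivalent, more self-contained route is the inductive bound $e(Q_d[S])\le \tfrac12|S|\log_2|S|$, which applied to $S=V(A)$ yields $e(A)+m_A\le (d-1)2^{d-2}=e(A)$ and hence $m_A=0$ directly. Either way, your argument is complete and correct.
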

	The proper $d$-edge coloring of $Q_d$
	obtained by coloring the $i$th dimensional matching of $Q_d$ by color $i$,
	$i=1,\dots,d$, we shall refer to as the {\em standard edge coloring} of $Q_d$.

	As pointed out in \cite{CasselgrenMarkstromPham}, the colors in
	the proper edge coloring in Proposition \ref{prop:HavelMoravek}
	correspond to dimensional matchings in $Q_d$ (see also \cite{Harary}).
	In particular, Proposition \ref{prop:HavelMoravek} holds if we take the
	dimensional matchings as the colors. Furthermore we have the following.
	
	\begin{lemma}
	\label{lem:DimMathInduce}
		The subgraph induced by $r$ dimensional matchings
		in $Q_d$ is isomorphic to a disjoint union of
		$r$-dimensional hypercubes.
	\end{lemma}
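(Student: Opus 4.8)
The plan is to work directly with the standard vertex labeling of $Q_d$, in which $V(Q_d)=\{0,1\}^d$ and two vertices are adjacent precisely when their labels differ in a single coordinate. Under this labeling the $d$ dimensional matchings are exactly the matchings $M_1,\dots,M_d$, where $M_i$ joins each vertex $x$ to the vertex $x\oplus e_i$ obtained by flipping the $i$th coordinate; indeed, by Lemma \ref{lem:matchings} there are exactly $d$ dimensional matchings, and the $d$ coordinate matchings $M_1,\dots,M_d$ are $d$ distinct ones, so they account for all of them. Thus choosing $r$ dimensional matchings amounts to choosing a set $D=\{i_1,\dots,i_r\}\subseteq\{1,\dots,d\}$ of coordinates, and the induced subgraph $H$ has vertex set $\{0,1\}^d$ and edge set $\bigcup_{i\in D}M_i$.

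First I would identify the connected components of $H$. Since every edge of $H$ flips a coordinate lying in $D$, traversing any edge leaves the coordinates outside $D$ unchanged; hence two vertices in the same component must agree on all coordinates in $\{1,\dots,d\}\setminus D$. Conversely, if $u$ and $v$ agree off $D$, then one can pass from $u$ to $v$ by flipping, one at a time, the coordinates in $D$ on which they differ, each flip being an edge of $H$. Therefore the components of $H$ are exactly the classes of the equivalence relation ``agreeing on every coordinate outside $D$'', and these classes are indexed by the $2^{d-r}$ possible assignments of $0/1$ values to the $d-r$ coordinates not in $D$.

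Finally I would check that each component is isomorphic to $Q_r$. Fix a choice of $0/1$ values for the coordinates outside $D$, and let $C$ be the corresponding component. The map sending a vertex of $C$ to the restriction of its label to the coordinates in $D$ is a bijection from $V(C)$ onto $\{0,1\}^r$, and two vertices of $C$ are adjacent in $H$ exactly when their restrictions differ in a single coordinate of $D$, which is precisely adjacency in $Q_r$. Hence each component is isomorphic to $Q_r$, and $H$ is the disjoint union of these $2^{d-r}$ copies of $Q_r$, as claimed.

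I expect the only genuinely delicate point to be the opening identification of the $d$ dimensional matchings with the $d$ coordinate matchings $M_1,\dots,M_d$; once the combinatorial model $V(Q_d)=\{0,1\}^d$ is in place, the component analysis and the isomorphism are routine bookkeeping.
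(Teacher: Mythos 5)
Your proof is correct. The paper offers no proof of Lemma \ref{lem:DimMathInduce} at all---it is stated as a ``simple observation''---so there is nothing to diverge from; your argument is the standard one any reader would supply: identify $V(Q_d)$ with $\{0,1\}^d$, match the $d$ dimensional matchings with the $d$ coordinate matchings via the count in Lemma \ref{lem:matchings}, observe that the components of the edge-induced subgraph are the classes of vertices agreeing on the coordinates outside the chosen set $D$, and project each class onto its $D$-coordinates to get an isomorphism with $Q_r$. All steps check out, including the slightly delicate opening identification, which you justify correctly.
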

	
	This simple observation shall be used quite frequently below.
	In particular, for future reference, we state the following
	consequence of Lemma \ref{lem:DimMathInduce}.
	
	\begin{lemma}
	\label{lem:4cycle}
		In the standard $d$-edge coloring, every edge of $Q_d$ 
		is in exactly $d-1$ $2$-colored $4$-cycles.
	\end{lemma}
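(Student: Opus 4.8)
The plan is to read the statement directly off Lemma \ref{lem:DimMathInduce}. First I would fix an arbitrary edge $e$ of $Q_d$ and observe that, in the standard coloring, $e$ receives some color $i$, i.e.\ it belongs to the $i$th dimensional matching. Any $2$-colored $4$-cycle through $e$ is bicolored with two colors that alternate around it, giving two edges of each color; since $e$ itself is one of these edges and is colored $i$, one of the two colors of the cycle must be $i$, and the other is some $j \in \{1,\dots,d\}\setminus\{i\}$. Thus it suffices to count, for each fixed $j \neq i$, the number of $(i,j)$-colored $4$-cycles containing $e$.

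Next I would apply Lemma \ref{lem:DimMathInduce} with $r=2$ to the pair of dimensional matchings colored $i$ and $j$: the subgraph $H_{i,j}$ that they induce is a disjoint union of $2$-dimensional hypercubes, that is, a disjoint union of $4$-cycles. Each such component uses only edges from matchings $i$ and $j$, alternating around the square, so it is precisely an $(i,j)$-colored $4$-cycle; conversely every $(i,j)$-colored $4$-cycle is such a component. Since $e$ is colored $i$, it is an edge of $H_{i,j}$, and because $H_{i,j}$ is a \emph{disjoint} union, $e$ lies in exactly one of its components. Hence for each admissible $j$ there is exactly one $(i,j)$-colored $4$-cycle through $e$.

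Finally, summing over the $d-1$ choices of $j$ yields exactly $d-1$ $2$-colored $4$-cycles containing $e$. The two things worth noting are that different values of $j$ give different $4$-cycles (they use different pairs of colors) and that every $2$-colored $4$-cycle through $e$ is accounted for (by the first paragraph); both are immediate. I expect no genuine obstacle here, since the entire content is carried by Lemma \ref{lem:DimMathInduce}, and what remains is the bookkeeping of which second color the cycle uses.
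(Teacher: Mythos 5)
Your proposal is correct and is exactly the intended argument: the paper states Lemma \ref{lem:4cycle} without proof as a direct consequence of Lemma \ref{lem:DimMathInduce}, and your count of one $(i,j)$-colored $4$-cycle through $e$ for each of the $d-1$ choices of $j$ is precisely the bookkeeping the authors leave to the reader.
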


	We shall also need some standard definitions
	on list edge coloring.
	Given a graph $G$, assign to each edge $e$ of $G$ a set
	$\mathcal{L}(e)$ of colors.
	If all lists have equal size $k$, then $\mathcal{L}$
	is called a \emph{$k$-list assignment}.
	Usually, we seek a proper
	edge coloring $\varphi$ of $G$,
	such that $\varphi(e) \in \mathcal{L}(e)$ for all
	$e \in E(G)$. If such a coloring $\varphi$ exists then
	$G$ is \emph{$\mathcal{L}$-colorable} and $\varphi$
	is called an \emph{$\mathcal{L}$-coloring}. 
	Denote by $\chi'_L(G)$ the minimum integer $t$
	such that $G$ is $\mathcal{L}$-colorable
	whenever $\mathcal{L}$ is a $t$-list assignment.
	A fundamental result in list edge coloring theory
	is the following theorem by Galvin
	\cite{Galvin}. As usual, $\chi'(G)$ denotes the chromatic
	index of a multigraph $G$.

\begin{theorem}
\label{th:Galvin}
	For any bipartite multigraph $G$, $\chi_L'(G) = \chi'(G)$.
\end{theorem}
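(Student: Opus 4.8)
The inequality $\chi'_L(G) \ge \chi'(G)$ is immediate, since an ordinary proper edge coloring is exactly an $\mathcal{L}$-coloring for the constant list assignment $\mathcal{L}(e) = \{1,\dots,\chi'(G)\}$; the content of the theorem is the reverse inequality. Because $G$ is bipartite, K\"onig's edge coloring theorem gives $\chi'(G) = \Delta(G)$, so it suffices to show that $G$ is $\mathcal{L}$-colorable for every $\Delta(G)$-list assignment $\mathcal{L}$. The plan is to recast this as a list \emph{vertex} coloring problem on the line graph $H = L(G)$ and attack it with the kernel method.

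First I would fix a proper $\Delta$-edge coloring $c$ of $G$, which exists by K\"onig's theorem, and use it to orient $H$. Writing $G$ with parts $X$ and $Y$, two vertices $e,f$ of $H$ (that is, two edges of $G$) are adjacent precisely when they share an endpoint $v$; since $c$ is proper, $c(e)\neq c(f)$, and I orient the arc from $e$ to $f$ when either $c(e) < c(f)$ and $v \in X$, or $c(e) > c(f)$ and $v \in Y$. For a vertex $e = xy$ of $H$ with $x \in X$ and $y \in Y$, the out-arcs at $x$ go to edges at $x$ of larger color (at most $\Delta - c(e)$ of them) and the out-arcs at $y$ go to edges at $y$ of smaller color (at most $c(e) - 1$ of them), so the out-degree satisfies $d^+(e) \le (\Delta - c(e)) + (c(e) - 1) = \Delta - 1$. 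Hence $|\mathcal{L}(e)| = \Delta \ge d^+(e) + 1$ for every vertex of $H$.

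The engine of the argument is the Bondy--Boppana--Siegel lemma: if $D$ is a digraph in which every induced subdigraph has a \emph{kernel}---an independent set meeting every nonempty out-neighborhood---and $\mathcal{L}$ is a list assignment with $|\mathcal{L}(v)| \ge d^+_D(v) + 1$ for all $v$, then $D$ admits a proper $\mathcal{L}$-coloring. Applied to $H$ with the orientation above and the $\Delta$-list assignment $\mathcal{L}$, this yields an $\mathcal{L}$-coloring of $H$, which is exactly an $\mathcal{L}$-edge-coloring of $G$, and the proof is complete.

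The main obstacle is verifying that every induced subdigraph of $H$ has a kernel; I would deduce this from the stable matching theorem of Gale and Shapley. An induced subdigraph corresponds to a subset $F \subseteq E(G)$, and at each vertex $v$ the arcs among the edges of $F$ at $v$ form a transitive tournament ordered by $c$, which I read off as a linear preference order at $v$. A kernel of the subdigraph then corresponds to a stable matching $S \subseteq F$ for these preferences: an independent set (a partial matching) such that every edge of $F \setminus S$ has, at one of its endpoints, an edge of $S$ that is preferred there, i.e. an out-neighbor in $S$. The Gale--Shapley theorem guarantees such a stable matching exists, and unwinding the definitions shows it is precisely a kernel. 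This kernel-perfectness of the oriented line graph is the crux of the whole argument; once it is established, the out-degree bound together with the Bondy--Boppana--Siegel lemma finishes the proof mechanically.
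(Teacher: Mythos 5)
The paper does not prove this statement at all: Theorem~\ref{th:Galvin} is quoted as a known result of Galvin, with a citation in place of a proof, so there is no in-paper argument to compare yours against. What you have written is the standard kernel-method proof of Galvin's theorem (essentially Galvin's own argument): reduce to $\Delta$-list-edge-colorability via K\"onig, orient the line graph by a fixed proper $\Delta$-edge coloring so that out-degrees are at most $\Delta-1$, invoke the Bondy--Boppana--Siegel lemma, and obtain kernels of induced subdigraphs from stable matchings via Gale--Shapley. The outline is correct, including the out-degree computation $(\Delta-c(e))+(c(e)-1)=\Delta-1$ and the identification of stable matchings with kernels. Two small points to tighten if you write this out in full: your parenthetical gloss of ``kernel'' (``an independent set meeting every nonempty out-neighborhood'') is not the definition the Bondy--Boppana--Siegel induction needs --- the correct condition, which you do in fact use later when unwinding stability, is that every vertex \emph{outside} the kernel has an out-neighbor \emph{in} it; and you should cite or prove the version of Gale--Shapley for arbitrary bipartite (multi)graphs with preference lists rather than the classical complete-bipartite formulation, since that is the form your induced subdigraphs require.
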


\section{Avoiding general partial edge colorings}

Most of the results of this paper are partial results towards the following
general conjecture for hypercubes. This is a variant of a conjecture for $K_{n,n}$
first suggested by Markstr\"om based on unavoidable $n$-edge colorings
of $K_{n,n}$ (see e.g. \cite{Casselgren, MarkstromOhman}).

\begin{conjecture}
\label{conj:general}
	For any $d\geq 1$, if $\varphi$ is a partial $d$-edge coloring
	of $Q_d$ where every color appears on at most $d-2$ edges, then
	$\varphi$ is avoidable.
\end{conjecture}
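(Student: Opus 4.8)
The plan is to recast avoidance as a list-edge-coloring problem and then repair a canonical coloring using local interchanges, spending the sparsity budget of $d-2$ very carefully. Given the partial coloring $\varphi$, define a list assignment $\mathcal{L}$ by setting $\mathcal{L}(e)=\{1,\dots,d\}\setminus\{\varphi(e)\}$ on every $\varphi$-colored edge and $\mathcal{L}(e)=\{1,\dots,d\}$ otherwise. A proper $\mathcal{L}$-coloring is precisely a proper $d$-edge coloring avoiding $\varphi$. Since $Q_d$ is bipartite with $\chi'(Q_d)=d$, Galvin's Theorem \ref{th:Galvin} gives $\chi'_L(Q_d)=d$, so if every list had size $d$ we would be done immediately. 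The entire difficulty is concentrated on the $\varphi$-colored edges, whose lists have size only $d-1$.

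First I would fix the standard edge coloring $\sigma$ of $Q_d$ as a baseline and call an edge $e$ a \emph{conflict} if $\sigma(e)=\varphi(e)$; note that each color contributes at most $d-2$ conflicts. The goal is then to modify $\sigma$ into a proper coloring with no conflicts. The basic repair move is the interchange on a $2$-colored $4$-cycle: by Lemma \ref{lem:4cycle} every edge lies in exactly $d-1$ such $4$-cycles under $\sigma$, one for each color $j\ne\sigma(e)$, and swapping the two colors on such a cycle is a proper interchange recoloring only its four edges. Thus a conflict at $e$ with $\sigma(e)=\varphi(e)=i$ can be removed by choosing an $(i,j)$-colored $4$-cycle through $e$ and swapping, which gives $e$ the new color $j\ne i$.

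The core of the argument is to choose these swaps so that no new conflicts are created and the process terminates. For a single conflict there are $d-1$ candidate $4$-cycles, and the hypothesis that each color occurs at most $d-2$ times under $\varphi$ is exactly what should guarantee that at least one admissible swap survives after forbidding those cycles whose three other recolored edges would clash with $\varphi$. I would attempt to perform all repairs simultaneously by selecting a system of pairwise edge-disjoint correcting $4$-cycles, one per conflict; the existence of such a system is naturally phrased as a matching (or a proper coloring) in an auxiliary hypergraph whose hyperedges are the admissible $4$-cycles, and one would use the sparsity together with Lemma \ref{lem:DimMathInduce} to bound how many candidate cycles any two conflicts can share.

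The main obstacle — and the reason Conjecture \ref{conj:general} is stated as a conjecture rather than proved — is precisely this global coordination at the tight threshold $d-2$. A $4$-cycle swap touches four edges, so repairing one conflict can introduce others, and passing to longer bicolored-path interchanges only makes the propagation harder to control because $Q_d$ contains long cycles. Since unavoidable colorings already exist once a single color is used $d-1$ times, the budget $d-2$ leaves essentially no slack, so a purely greedy or union-bound argument is too lossy; this is exactly why the weaker bound $d/8$ (which leaves a constant fraction of each edge's $d-1$ cycles free) and the proper-$\varphi$ case (where every $\varphi^{-1}(i)$ is a matching, giving strong local structure) are tractable while the general tight statement is not. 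A successful attack would likely need either an entropy-compression or Lov\'asz-Local-Lemma style analysis of the repair process, or a global decomposition that processes conflicts dimension by dimension, exploiting the recursive description of $Q_d$ as two copies of $Q_{d-1}$ joined by a dimensional matching.
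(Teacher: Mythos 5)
The statement you were asked to prove is Conjecture~\ref{conj:general}: the paper itself offers no proof of it, only partial results (Theorem~\ref{th:gen} under the much stronger hypothesis that every color appears on at most $d/8$ edges plus structural conditions, Theorem~\ref{th:prop} for proper precolorings, Proposition~\ref{prop:4col} for at most three colors). Your submission is likewise not a proof: after the correct reduction to a list-coloring problem via Theorem~\ref{th:Galvin} and the setup of the standard coloring with $4$-cycle interchanges, you stop precisely at the step that constitutes the entire content of the conjecture --- coordinating the repair moves so that no new conflicts are created at the tight threshold $d-2$ --- and you say so explicitly. That is an honest and accurate assessment of the state of the problem, but it means the proposal has a gap that is not a technical oversight: it is the whole argument. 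Note also that the Galvin step buys you nothing here, since every $\varphi$-colored edge has a list of size $d-1 < d$, so $\chi'_L(Q_d)=d$ is simply inapplicable to those edges; the reduction is a restatement of the problem, not progress on it.

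One substantive remark on method. Your plan is to remove conflicts edge-by-edge, choosing for each conflicting edge an admissible $(i,j)$-colored $4$-cycle and hoping to find an edge-disjoint system of such cycles. The paper's partial results are organized differently and more robustly: one never asks the modified standard coloring $f'$ to avoid $\varphi$ directly. Instead, one only uses interchanges on $2$-colored $4$-cycles to rebalance the color classes of $f'$ (so that no class meets too many distinct $\varphi$-colors), and then applies Hall's theorem to an auxiliary bipartite graph between the $d$ color classes of $f'$ and the $d$ colors, relabelling the classes by a perfect matching. This global permutation step absorbs all the ``new conflicts'' your per-edge scheme would have to track by hand, and the counting needed to break anti-Hall sets is exactly what forces the $d/8$ (rather than $d-2$) hypothesis in Theorem~\ref{th:gen}. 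So your diagnosis of where the difficulty lies is correct, but if you pursue this further, the class-permutation formulation is the one worth refining rather than disjoint systems of correcting $4$-cycles.
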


Conjecture \ref{conj:general} is best possible: consider the partial coloring of $Q_d$ 
obtained by coloring $d-1$ edges incident with a vertex $u$ by the color $1$,
and coloring $d-1$ edges incident with another vertex $v$ by the color $2$.
This partial coloring is unavoidable if $uv \in E(Q_d)$ and it is uncolored.

Note further that such a statement as in Conjecture \ref{conj:general} 
does not hold for
general $d$-regular (bipartite) graphs.
Indeed, we have the following:

\begin{proposition}
\label{prop:counter}
	For any $d \geq 1$, there is a $d$-regular bipartite graph $G$ and a 
	partial proper $d$-edge coloring with exactly
	$d$ colored edges that is not avoidable.
\end{proposition}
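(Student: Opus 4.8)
The plan is to exploit the fact that in a proper $d$-edge coloring of a $d$-regular bipartite graph every color class is a perfect matching (each vertex has degree $d$ and sees all $d$ colors exactly once; existence of such a coloring follows from K\"onig's theorem, or from Theorem \ref{th:Galvin}). Hence, if I precolor a set $B$ of edges \emph{all with the same color}, say color $1$, then a proper $d$-edge coloring $f$ avoids this precoloring if and only if the color class $f^{-1}(1)$ is a perfect matching of $G$ disjoint from $B$, that is, a perfect matching of $G-B$. So such a monochromatic precoloring is unavoidable \emph{precisely} when $G-B$ has no perfect matching. For $\varphi$ to be a proper precoloring with all of $B$ colored $1$, the set $B$ must be a matching, and I want $|B|=d$.

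It therefore suffices to construct, for each $d\ge 1$, a $d$-regular bipartite graph $G$ together with a matching $B$ of size exactly $d$ such that $G-B$ has no perfect matching, and I would do this explicitly. Take left vertices $\ell_1,\dots,\ell_{d-1}$ and $m_1,\dots,m_d$, and right vertices $y_1,\dots,y_d$ and $z_1,\dots,z_{d-1}$. Join every $\ell_j$ to every $y_i$; join $m_i$ to $y_i$ for each $i$; and join every $m_i$ to every $z_j$. A direct degree count gives $\deg(\ell_j)=d$, $\deg(m_i)=1+(d-1)=d$, $\deg(y_i)=(d-1)+1=d$, and $\deg(z_j)=d$, so $G$ is simple, bipartite and $d$-regular (for $d=1$ the families of $\ell$'s and $z$'s are empty and $G$ is a single edge). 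Let $B=\{m_iy_i : 1\le i\le d\}$, which is a matching of size $d$.

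Next I verify the Hall obstruction. In $G-B$ the only neighbors of $y_1,\dots,y_d$ are $\ell_1,\dots,\ell_{d-1}$, so the set $\{y_1,\dots,y_d\}$ of $d$ vertices has neighborhood of size only $d-1$; by Hall's theorem $G-B$ has no perfect matching. Consequently, precoloring every edge of $B$ with color $1$ yields a proper partial $d$-edge coloring with exactly $d$ colored edges that cannot be avoided, since any avoiding coloring would have to realize its color-$1$ class as a perfect matching of $G-B$. This works uniformly for all $d\ge 1$.

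The main conceptual step is the first one: recognizing that unavoidability of a single-color precoloring is equivalent to destroying all perfect matchings by deleting the precolored edges. Once this reformulation is in hand, the whole problem reduces to engineering a $d$-regular bipartite graph in which a matching of size exactly $d$ acts as a ``Hall blocker'', and the explicit $G$ above does exactly this. I expect no serious remaining obstacle: the only things to check are the degree count and the single Hall inequality $|N_{G-B}(\{y_1,\dots,y_d\})|=d-1<d$, both of which are immediate.
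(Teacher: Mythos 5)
Your proof is correct, but it takes a genuinely different route from the paper's. The paper builds $G$ as a cyclic chain of $d$ copies of $K_{d,d}-e$ linked by edges $a_1b_2,\dots,a_db_1$, colors the $i$th linking edge with color $i$, and uses the fact that any proper $d$-edge coloring of $G$ is forced to assign one common color to all $d$ linking edges (since in each block the color deficient at $a_i$ must equal the color deficient at $b_i$); as all $d$ colors appear on those edges, a collision is unavoidable. You instead precolor a matching $B$ of size $d$ monochromatically and reduce unavoidability to the statement that $G-B$ has no perfect matching, which you arrange via a single Hall violator $|N_{G-B}(\{y_1,\dots,y_d\})|=d-1$; your degree counts and the Hall check are all correct, and the reduction is sound because every color class of a proper $d$-edge coloring of a $d$-regular bipartite graph is a perfect matching. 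The two constructions illustrate sharpness of Proposition \ref{prop:multi} at opposite extremes: the paper's example uses $d$ colors each appearing once (the case $k=d$), while yours uses one color appearing $d$ times (the case $k=1$). Your argument is arguably easier to verify, resting on one Hall inequality rather than a forcing argument inside each $K_{d,d}-e$ block, though the paper's chain construction has the feature that the precoloring is rainbow, which is closer in spirit to the configurations discussed around Conjecture \ref{conj:general}.
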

\begin{proof}
	The case when $d=1$ is trivial, so assume that $d \geq 2$.
	Let $G_1,\dots, G_d$ be $d$ copies of the graph $K_{d,d}-e$, that is, the complete
	bipartite graph $K_{d,d}$ with an arbitrary edge $e$ removed.
	Denote by $a_ib_i$ the edge that was removed from $K_{d,d}$ to 
	form the graph $G_i$.
	From $G_1,\dots, G_d$, we construct the $d$-regular bipartite 
	graph $G$ by adding the edges
	$a_1b_2, a_2b_3,\dots, a_{d-1} b_d, a_d b_1$. 
	
	We define a partial $d$-edge coloring $\varphi$ of $G$ by coloring $a_ib_{i+1}$
	by the color $i$, $i=1,\dots,d$ (where indices are taken modulo $d$).
	Now, it is straightforward that any proper $d$-edge coloring of $G$ uses
	the same color on all the edges in the set 
	$\{a_1b_2, a_2b_3, \dots, a_{d-1} b_d, a_d b_1\}$; therefore, $\varphi$
	is not avoidable.
\end{proof}

On the other hand, a partial coloring of at most $d-1$ edges of a
$d$-edge-colorable graph is always avoidable:

\begin{proposition}
\label{prop:multi}
Let $k\in \{1,\dots,d\}$
and let $G$ be a $d$-edge-colorable graph.
If $G$ is colored with at most $k$ colors,
and every color appears on at most $d-k$ edges, then
there is a proper $d$-edge coloring of $G$
that avoids the preassigned colors.
\end{proposition}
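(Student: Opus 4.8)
The plan is to start from an arbitrary proper $d$-edge coloring $f_0$ of $G$, which exists since $G$ is $d$-edge-colorable, and to produce the desired avoiding coloring in the form $f = \pi \circ f_0$ for a suitable permutation $\pi$ of the color set $\{1,\dots,d\}$. Because $\pi$ is a bijection, $f$ is automatically a proper $d$-edge coloring, so the only task is to choose $\pi$ so that $f$ avoids the precoloring. Writing $\varphi(e)$ for the preassigned color of a $\varphi$-colored edge $e$, the requirement $f(e)\neq\varphi(e)$ becomes $\pi(f_0(e))\neq\varphi(e)$. Hence it suffices to find a $\pi$ avoiding the set of \emph{forbidden pairs}
\[
F = \{(a,b) : f_0(e)=a \text{ and } \varphi(e)=b \text{ for some } \varphi\text{-colored edge } e\},
\]
meaning $\pi(a)\neq b$ whenever $(a,b)\in F$. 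Note that this argument will make no use of bipartiteness and so applies to any $d$-edge-colorable $G$.

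Next I would recast this as a matching problem. Consider the bipartite graph $B$ with parts $A=\{1,\dots,d\}$ (colors in their "old" role) and $C=\{1,\dots,d\}$ (colors in their "new" role), where $a\in A$ is joined to $b\in C$ precisely when $(a,b)\notin F$; a perfect matching in $B$ is exactly a permutation $\pi$ avoiding $F$. Two structural observations drive the argument. First, every forbidden pair has its second coordinate among the at most $k$ colors used by $\varphi$; thus only those at most $k$ vertices of $C$ — the \emph{bad} vertices — can fail to be adjacent to some vertex of $A$, while every other (\emph{good}) vertex of $C$ is adjacent to all of $A$. Second, for a fixed color $b$ used by $\varphi$, the number of forbidden pairs with second coordinate $b$ is at most the number of distinct $f_0$-colors occurring on the edges of $\varphi^{-1}(b)$, hence at most $|\varphi^{-1}(b)|\le d-k$; so each bad vertex of $C$ is still adjacent to at least $k$ vertices of $A$.

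The crux is then to verify Hall's condition for $B$, and this is where I expect the counting to be the main obstacle. Fix $S\subseteq A$ and consider its neighborhood $N(S)\subseteq C$. If $S$ is nonempty then $N(S)$ contains all good vertices, of which there are at least $d-k$, so $|N(S)|\ge d-k\ge|S|$ whenever $|S|\le d-k$. If instead $|S|\ge d-k+1$, then $A\setminus S$ has at most $k-1$ elements, so any bad vertex $b\in C$ — which has at least $k$ neighbors in $A$ — must have a neighbor inside $S$; therefore $N(S)=C$ and $|N(S)|=d\ge|S|$. In both cases Hall's condition holds, so $B$ has a perfect matching, yielding the required $\pi$ and hence the avoiding coloring $f=\pi\circ f_0$. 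The only genuine subtlety is the second case of the Hall verification, where one must use \emph{both} that each color class of $\varphi$ has size at most $d-k$ and that there are at most $k$ such classes; the hypotheses are calibrated so that these two bounds meet exactly.
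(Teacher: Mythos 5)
Your proof is correct, and it is essentially the argument the paper has in mind (the paper omits the proof, citing \cite{Casselgren}, but the same Hall's-condition argument on the bipartite graph between the color classes of a fixed proper $d$-edge coloring and the colors $\{1,\dots,d\}$ is used explicitly elsewhere in the paper, e.g.\ in Lemma \ref{lem:onecolor} and Theorem \ref{th:gen}); your two-case verification of Hall's condition, using that at most $k$ colors occur and that each occurs on at most $d-k$ edges, is exactly the intended calibration of the hypotheses.
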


This is a reformulation for general graphs of a theorem in \cite{Casselgren}
for complete bipartite graphs;
the proof is identical to the argument given there; thus, we omit it.

Note further that Proposition \ref{prop:multi} does not
set any restrictions on where colors may appear,
so several colors may be assigned
to the same edge. Thus, it has a natural interpretation as a statement on list
edge coloring.

By the example preceding Proposition \ref{prop:multi}, it is in general sharp; however,
by requiring that the colored edges satisfy some structural condition,
we can prove that other configurations are avoidable as well.


\begin{proposition}
	Let $G$ be a $d$-edge colorable graph.
	If $\varphi$ is a partial $d$-edge coloring of $G$,
	and there is a set $K$ of $k$ vertices such that every precolored
	edge is incident to some vertex from $K$,
	and every color occurs on at most $d-k$ edges,
	then $\varphi$ is avoidable.
\end{proposition}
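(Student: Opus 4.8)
The plan is to fix an arbitrary proper $d$-edge coloring $f_0$ of $G$ — which exists since $G$ is $d$-edge colorable — and then to post-compose it with a single permutation $\pi$ of the color set $\{1,\dots,d\}$. Since relabelling colors preserves properness, $\pi\circ f_0$ is automatically a proper $d$-edge coloring, so the only thing to arrange is that $\pi(f_0(e))\neq\varphi(e)$ for every $\varphi$-colored edge $e$. Viewing $\pi$ as a perfect matching of the complete bipartite graph $K_{d,d}$ between a set of ``source'' colors and a set of ``target'' colors, this means I must avoid the set $F=\{(f_0(e),\varphi(e)) : e \text{ is } \varphi\text{-colored}\}$ of forbidden ordered pairs. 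Thus the whole statement reduces to showing that $K_{d,d}$ with the edge set $F$ deleted still has a perfect matching.

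To produce this matching I would bound the number of forbidden pairs on each side and then invoke Hall's theorem (equivalently König's theorem). On the target side, a target color $b$ occurs in a pair of $F$ only through a $\varphi$-colored edge $e$ with $\varphi(e)=b$; since every color appears on at most $d-k$ edges of $\varphi$, each target vertex meets at most $d-k$ edges of $F$, hence has at least $k$ allowed edges. The source side is where the structural hypothesis enters: the $\varphi$-colored edges with a fixed $f_0$-color $a$ all lie in a single color class of $f_0$, so they form a matching; since each of these edges is incident to the $k$-vertex set $K$ and the edges are pairwise disjoint, they meet pairwise disjoint vertices of $K$, so there can be at most $k$ of them. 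Consequently each source vertex meets at most $k$ edges of $F$ and so has at least $d-k$ allowed edges.

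Finally I would run the standard König argument on $K_{d,d}-F$: if there were no perfect matching, there would be a cover $A\cup B$ with $A$ a set of sources, $B$ a set of targets, and $|A|+|B|<d$; choosing a source $a_0\notin A$ forces all of its at least $d-k$ allowed edges into $B$, so $|B|\ge d-k$, and choosing a target $b_0\notin B$ forces all of its at least $k$ allowed edges into $A$, so $|A|\ge k$, giving $|A|+|B|\ge d$, a contradiction. Hence the required permutation $\pi$ exists and $f=\pi\circ f_0$ is a proper $d$-edge coloring avoiding $\varphi$. I expect the only real content to be the source-side bound — recognising that one color class of $f_0$, restricted to the $\varphi$-colored edges, is a matching meeting $K$ and therefore has at most $k$ edges; the target-side bound is immediate from the color-frequency hypothesis, and the existence of the matching is routine once both degree bounds are in hand. (The case $k=d$ is vacuous, as then no edge is $\varphi$-colored.)
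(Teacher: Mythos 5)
Your argument is correct and is essentially the paper's own: the paper likewise fixes a proper $d$-edge coloring, reduces avoidability to finding a system of distinct representatives (a perfect matching between the $d$ color classes and the $d$ colors), and uses exactly your two degree bounds --- each class, being a matching whose precolored edges meet the $k$-set $K$ in distinct vertices, forbids at most $k$ colors, while each color is forbidden for at most $d-k$ classes. The only difference is cosmetic (you phrase the matching step via K\H{o}nig's theorem on $K_{d,d}$ minus the forbidden pairs rather than via Hall's condition), so there is nothing to add.
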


	The proof of this proposition is similar to the proof of the previous one.
	The only essential difference is that instead of using the fact
	that the precoloring uses at most $k$ colors,	
	one employs the property that
	every matching in a decomposition obtained
	from a proper $k$-edge coloring of $G$
	contains edges with at most $k$ distinct colors from $\varphi$; 
	we omit the details.

\bigskip

Next, we prove the following weaker version of Conjecture \ref{conj:general}.
Following \cite{CasselgrenMarkstromPham2}, we say that 
two edges in a hypercube are {\em parallel} if they are non-adjacent and
contained in a common $4$-cycle.

We shall use the following simple
lemma.

\begin{lemma}
\label{lem:onecolor}
	If $\varphi$ is a partial $d$-edge coloring of $Q_d$, $d \geq 3$, 
	where every color
	appears on at most one edge, then $\varphi$ is avoidable.
\end{lemma}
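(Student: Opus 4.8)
The plan is to induct on $d$, using Proposition~\ref{prop:multi} to settle the case where few colors occur and a matching-removal step to reduce the dimension otherwise.

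First I would note that if $\varphi$ uses at most $d-1$ colors, we are done immediately: letting $k$ be the number of colors that actually appear, we have $k \le d-1$ and every color occurs on at most $1 \le d-k$ edges, so Proposition~\ref{prop:multi} yields a proper $d$-edge coloring of $Q_d$ avoiding $\varphi$. It therefore remains to treat the case where all $d$ colors appear, so that there are exactly $d$ precolored edges $e_1,\dots,e_d$ with $\varphi(e_i)=i$; this I would handle by induction on $d$, the base case being $d=3$.

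For the inductive step $d\ge 4$, pick a color $c$ and, since $d\ge 2$, a dimension $j$ with $j\neq\dim(e_c)$, where $\dim(e_c)$ denotes the dimension of the unique edge precolored $c$; then $e_c$ does not lie in the $j$-th dimensional matching $M_j$. Color every edge of $M_j$ with $c$. Since $e_c$ is the only edge forbidding $c$ and $e_c\notin M_j$, this coloring of $M_j$ avoids $\varphi$ and is compatible with coloring the remainder of $Q_d$ using only the colors in $\{1,\dots,d\}\setminus\{c\}$. By Lemmas~\ref{lem:matchings} and~\ref{lem:DimMathInduce}, $Q_d-M_j$ is a disjoint union of two copies of $Q_{d-1}$, and the precoloring inherited by each copy (forgetting color $c$) is a partial $(d-1)$-edge coloring in which every color still appears on at most one edge. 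As $d-1\ge 3$, the induction hypothesis gives, in each copy, a proper $(d-1)$-edge coloring avoiding the inherited precoloring; together with $M_j$ these combine into a proper $d$-edge coloring of $Q_d$ avoiding $\varphi$, since every vertex meets exactly one edge of $M_j$ (colored $c$) and $d-1$ edges inside its copy (colored with distinct colors $\neq c$).

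The main obstacle is the base case $d=3$, which is also the reason the recursion must stop there. The same reduction turns $Q_3$ into two copies of $Q_2$, but a $4$-cycle carrying two ``crossed'' opposite precolored edges that forbid its two available colors is genuinely unavoidable -- precisely why the hypothesis $d\ge 3$ is needed. To finish $d=3$ I would again delete a dimensional matching $M_j$ colored $c$ with $j\neq\dim(e_c)$, but now choose $j$ so as to separate the two precolored edges forbidding the remaining two colors: if those two edges lie in a common dimension I put $j$ equal to it (sending both into $M_j$), and otherwise I take $j$ to be the dimension of one of them or a coordinate in which they differ. In every case the two edges are placed into different copies of $Q_2$ (or removed into $M_j$), so each resulting $4$-cycle carries at most one effective constraint and is avoidable. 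A short finite check over the possible relative positions of $e_1,e_2,e_3$ then completes the proof.
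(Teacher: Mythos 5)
Your proof is correct, but it takes a genuinely different route from the paper's. The paper argues non-inductively: it takes the standard edge coloring $f$ of $Q_d$, forms an auxiliary bipartite graph whose parts are the colors $\{1,\dots,d\}$ and the color classes $f^{-1}(i)$, joining $f^{-1}(i)$ to $j$ whenever no edge of $f^{-1}(i)$ is precolored $j$, and observes that since each color is precolored at most once, Hall's condition can only fail if a single dimensional matching contains \emph{all} the precolored edges; in that degenerate case one interchange on a bicolored $4$-cycle (via Lemma~\ref{lem:DimMathInduce}, and using $d\geq 3$) restores Hall's condition, and relabeling the dimensional matchings along the resulting perfect matching yields a coloring avoiding $\varphi$. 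That argument is uniform in $d$, needs no base case, and is deliberately the prototype for the proof of Theorem~\ref{th:gen}, which reuses the same machinery. Your argument instead exploits the recursive structure of the hypercube: Proposition~\ref{prop:multi} disposes of the case of at most $d-1$ colors, and otherwise you peel off a dimensional matching $M_j$ missing the edge precolored $c$, color it $c$, and induct on the two copies of $Q_{d-1}$ --- the same decomposition philosophy as Corollary~\ref{cor1} and its relatives. What each buys: the paper gets a short, case-free proof that sets up its main theorem; you get a more elementary and self-contained reduction (Hall's theorem only enters through Proposition~\ref{prop:multi}, whose proof the paper outsources anyway), at the price of concentrating all the difficulty in the $d=3$ base case.

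One repair is needed in that base case. Your stated rule ``if the two edges forbidding the remaining colors share a dimension, set $j$ equal to it'' breaks down when all three precolored edges lie in a single dimensional matching $M_t$: then for every choice of $c$ this rule demands $j=t=\dim(e_c)$, which your construction forbids. Your fallback option already fixes it: the two edges $e_a,e_b\in M_t$ differ in some coordinate $j\neq t$, and deleting $M_j$ places them in different copies of $Q_2$, so each $4$-cycle carries at most one effective constraint and Lemma~\ref{lem:Q2} applies. Folding this sub-case into your finite check (which otherwise goes through exactly as you describe), the proof is complete.
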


\begin{proof}
	Let $f$ be the proper $d$-edge coloring of $Q_d$ obtained by assigning
	color $i$ to the $i$th
	dimensional matching of $Q_d$, that is,
	$f$ is the standard edge coloring of $Q_d$.
	
	Consider the
	bipartite graph $B(f)$, with vertices for the colors $\{1,\dots,d\}$
	and for the color classes $f^{-1}(i)$
	of $f$, and where
	there is an edge between $f^{-1}(i)$ and $j$ if there is
	no edge colored $i$ under $f$ that is colored $j$ under $\varphi$.
	If 
	there is no set violating Hall's condition 
	for a matching in a bipartite graph, then $B(f)$ has a perfect matching,
	and by assigning colors to the color classes of $f$ according to this
	perfect matching, we obtain a proper $d$-edge coloring of $Q_d$ that 
	avoids $\varphi$.

	Now, if there is such a set violating Hall's condition, then one of the 
	color classes of $f$ contains all $\varphi$-colored edges.
	Without loss of generality, assume that $M_1$ is such a color class
	and consider the subgraph $H=Q_d[M_1 \cup M_2]$, where $M_2$ is another
	arbitrarily chosen color class of $f$.
	By Lemma \ref{lem:DimMathInduce}, $H$ consists of a collection 
	of bicolored $4$-cycles. By interchanging colors on such a bicolored
	cycle that contains at least one $\varphi$-colored edge, we obtain 
	a proper edge coloring $f'$ of $Q_d$ such that the bipartite graph
	$B(f')$, defined as above, contains
	a perfect matching. Thus there is a proper $d$-edge coloring
	that avoids $\varphi$.
\end{proof}

\begin{theorem}
\label{th:gen}
	Let $d\geq 1$, and let $\varphi$ be a partial $d$-edge coloring
	of $Q_d$. Assume $a(d)$ and $b(d)$ are functions satisfying
	that $\frac{109}{1776} d^2 - 2 b(d)\left(a(d) - \frac{7d}{8}\right)\geq 0$
	and $a(d) \geq b(d)$. 
	\begin{itemize}
	
	\item[(i)] If every color appears on at most $d/8$ edges, for every
	edge in $Q_d$ there is at most $b(d)$ other parallel $\varphi$-colored
	edges, and every dimensional matching in $Q_d$ contains at most
	$a(d)$ $\varphi$-precolored edges, then $\varphi$ is avoidable.
	
	\item[(ii)] For every constant $C_1 \geq 1$, there is a positive constant
	$C_2= 2 C_1(C_1+2)34^2$, 
	such that if every dimensional matching contains at most
	$C_1 d$ $\varphi$-colored edges and every color 
	appears on at most $\frac{d}{C_2}$
	edges under $\varphi$, then $\varphi$ is avoidable.
	
	\end{itemize}
\end{theorem}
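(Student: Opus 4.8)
The plan is to start from the standard $d$-edge coloring of $Q_d$ given by Lemma~\ref{lem:matchings}, in which the $i$th dimensional matching $M_i$ carries color $i$, and to turn it into a coloring avoiding $\varphi$ by a controlled family of interchanges on $2$-colored $4$-cycles, in the spirit of Lemma~\ref{lem:onecolor} but now with colors repeated. First I would apply a uniformly random permutation $\sigma$ of $\{1,\dots,d\}$, recoloring each $M_i$ by $\sigma(i)$; this keeps the coloring proper, and since every color is used on at most $d/8$ edges of $\varphi$, it guarantees that each matching $M_i$ meets at most $d/8$ \emph{conflicts}, namely edges $e\in M_i$ with $\varphi(e)=\sigma(i)$. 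In particular the total number of conflicts is at most $d^2/8$, the scale that will reappear in the hypothesis. The problem is thereby reduced to erasing every conflict by local recolorings that create none in return.

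For a single conflict $e=uv\in M_i$ the repair is an interchange on one of the $d-1$ $2$-colored $4$-cycles through $e$; these exist and are bicolored by Lemmas~\ref{lem:4cycle} and~\ref{lem:DimMathInduce}. The cycle determined by a second matching $M_j$ recolors $e$ together with its parallel mate in $M_i$ by $\sigma(j)$, and recolors the two $M_j$-edges at $u$ and $v$ by $\sigma(i)$. I call this cycle \emph{safe} if none of its four edges then agrees with $\varphi$. The forbidden indices $j$ are easily counted: a new conflict on the parallel mate needs a $\varphi$-colored parallel edge of $e$, of which there are at most $b(d)$, costing at most $b(d)$ indices; a new conflict on an incident $M_j$-edge needs that edge to be $\varphi$-colored by $\sigma(i)=\varphi(e)$, and since this color occurs at most $d/8$ times, at most $d/8$ further indices are lost. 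Thus in isolation every conflict has at least $d-1-b(d)-d/8$ safe repairs.

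The crux is to perform all repairs simultaneously without interference: two safe interchanges clash if their $4$-cycles share an edge, or if one recolors an edge on which another repair depends, and a matching bearing up to $a(d)$ $\varphi$-colored edges can host many such clashes, because each repair deposits a copy of a color $\sigma(i)$ onto the matchings it borrows from. I would analyze the randomized scheme --- the permutation $\sigma$ together with an independent uniformly chosen safe repair for each conflict --- and bound the expected number of \emph{obstructions}, meaning conflicts left without an admissible repair or pairs of mutually interfering repairs. Here $b(d)$ governs how many repairs compete over a common $4$-cycle neighborhood and $a(d)$ governs how the displaced color copies accumulate across a matching; the quantity $\tfrac{109}{1776}d^2-2b(d)\bigl(a(d)-\tfrac{7d}{8}\bigr)$ is precisely the resulting slack, the constraint biting only when a matching is heavily loaded, that is when $a(d)>\tfrac{7d}{8}$, in which case it caps $b(d)$. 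The hypothesis that this quantity is nonnegative, with $a(d)\ge b(d)$ used to merge the two counts, forces the expected number of obstructions below one, so by the probabilistic method some choice of $\sigma$ and of safe repairs yields a proper $d$-edge coloring avoiding $\varphi$. Making this interference bookkeeping rigorous --- pinning down the notion of obstruction so that its expectation is exactly governed by the displayed quadratic, and certifying that the chosen repairs are jointly realizable, e.g.\ by a Lov\'asz Local Lemma or a Hall-type selection of pairwise edge-disjoint safe cycles --- is the main obstacle.

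For part (ii) I would re-run the argument of (i) under its two hypotheses directly. The matching load bound plays the role of $a(d)=C_1 d$, while the per-color frequency $d/C_2$ supplies the $d/8$ hypothesis (note $C_2\ge 8$ for the stated constant) and simultaneously limits the number of $\varphi$-colored parallels that enter as $b(d)$. Feeding these into the inequality $\tfrac{109}{1776}d^2\ge 2b(d)\bigl(a(d)-\tfrac{7d}{8}\bigr)$ and solving for the largest admissible per-color frequency produces a threshold of the form $d/C_2$ with $C_2=2C_1(C_1+2)\,34^2$, the asserted value; once the parallel count is expressed through $C_1$ and the frequency this is a routine calculation, and it is the only point in the deduction I expect to require care.
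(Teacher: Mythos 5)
There is a genuine gap, and it sits exactly where you flag it yourself: the ``interference bookkeeping'' that you defer to an unspecified probabilistic or Hall-type argument \emph{is} the entire content of the theorem, and your framing of the target makes it harder than it needs to be. You aim to repair every conflict individually so that the final coloring avoids $\varphi$ edge by edge; a rough count already shows this is problematic, since up to $d^2/8$ conflicts each consume a $4$-cycle, the cycles through a fixed conflict $e$ pairwise share only $e$, and the roughly $3d$ edges lying on those cycles can all be consumed by other repairs, so the per-conflict supply of $d-1-b(d)-d/8$ safe cycles is not obviously enough, and nothing in your sketch shows that the displayed quadratic $\tfrac{109}{1776}d^2-2b(d)\bigl(a(d)-\tfrac{7d}{8}\bigr)$ is the slack of \emph{your} scheme rather than a number you are reverse-engineering from the statement. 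The paper avoids this entirely by never requiring the intermediate coloring to avoid $\varphi$: it only performs interchanges until every color class of the proper coloring contains at most $\tfrac{7}{8}d$ $\varphi$-colored edges (moving edges from ``heavy'' to ``light'' classes), and then obtains the avoiding coloring by a perfect matching in the bipartite graph of color classes versus colors, where Hall's condition follows from the $d/8$ per-color bound. This weaker target is what produces the $\tfrac{7d}{8}$ terms and the bound $2s+r\le\beta(d)-\tfrac{7d}{8}$ in the counting, and it is why the hypothesis suffices; your random permutation $\sigma$ adds nothing (the at-most-$d/8$-conflicts-per-matching bound holds for every permutation) and the final global reassignment of colors is the missing idea.

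For part (ii) there is a second, concrete error: you assert that the per-color frequency $d/C_2$ ``simultaneously limits the number of $\varphi$-colored parallels that enter as $b(d)$.'' It does not --- an edge has $d-1$ parallel edges, and all of them can be $\varphi$-colored using distinct colors each appearing only once, so no bound on color multiplicities bounds the number of $\varphi$-colored parallels. The paper needs a separate argument here: among the candidate $4$-cycles it selects ones whose heavy-class edges are parallel to at most $\tfrac{d}{34C_1}$ unused $\varphi$-colored edges, via an additional counting step (the sets $E'$, $E''$ and the family $\mathcal{C}$ of cycles, using Proposition~\ref{prop:HavelMoravek} to bound multiple counting by a factor of $6$). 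Without that step the reduction of (ii) to (i) does not go through.
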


Before proving Theorem \ref{th:gen}, allow us to comment on the possible
values of $a(b)$ and $b(d)$ for which the inequality in the theorem holds.
If we choose $b(d)$ to be as large as possible, that is, $a(d) =b(d)$,
then it suffices to require that $a(d) = b(d) \leq
\left(\left(\frac{109}{3552}+\frac{49}{96}\right)^{1/2}+ \frac{7}{16}\right)d
\approx 1.17 d$ for part (i) of
the theorem to hold.
On the other hand, if $b(d)$ is a ``sufficiently small'' linear function of $d$, then we can pick
$a(d)$ to be an arbitrarily large linear function of $d$.

\begin{proof}[Proof of Theorem \ref{th:gen}]
We first prove part (i) of the theorem.
Let $f$ be the standard edge coloring of $Q_d$.
As in the proof of the preceding lemma, 
our goal is to transform $f$ into a coloring $f'$ where
every color class contains edges of at most $\frac{7}{8} d$
distinct colors under $\varphi$, using interchanges on $2$-colored $4$-cycles;
this ensures that in the
bipartite graph $B$, with vertices for the colors $\{1,\dots,d\}$
and for the color classes $f'^{-1}(i)$
of $f'$, and where
there is an edge between $f'^{-1}(i)$ and $j$ if there is
no edge colored $i$ under $f'$ that is colored $j$ under $\varphi$,
there is no set violating Hall's condition for a matching in a bipartite graph. 
Hence, $B$ has a perfect matching, and by coloring the color classes
of $f'$ according to the perfect matching, 
we obtain a proper $d$-edge coloring of $Q_d$
that avoids $\varphi$.

We shall use the following method for obtaining such a proper
coloring $f'$ from $f$. Suppose that there is some color class
of $f$ that contains at least $\frac{7}{8}d+1$ edges that are colored
under $\varphi$; let $M_1=f^{-1}(1)$ be such a color class.
We call such
a color class {\em heavy}; a color class 
that contains at most $\frac{7}{8}d-2$ edges that are colored
under $\varphi$ is called a {\em light} color class.

Since there are at most $\frac{1}{8}d^2$ edges in $Q_d$ that are colored
under $\varphi$, there must be some light color class of $f$;
without loss of generality assume that $M_2=  f^{-1}(2)$ is such a color class.
By Lemma \ref{lem:DimMathInduce}, 
the subgraph $Q_d[M_1 \cup M_2]$
of $Q_d$ incuded by $M_1$ and $M_2$ is a collection of bicolored $4$-cycles.
Now, since $M_1$ is heavy and $M_2$ is light, there is a $4$-cycle $C$
in $Q_d[M_1 \cup M_2]$ such that by interchanging colors on $C$
we obtain a coloring $f_1$ where the color class
$f^{-1}_1(1)$ contains at least one less edge that is colored under
$\varphi$ and $f^{-1}_1(2)$ contains at least one more edge that
is colored under $\varphi$.

We shall apply this procedure iteratively and repeatedly select previously unused
edges of a light color class that are not colored under $\varphi$
(where {\em unused} means that the edges have not been involved in any
interchanges performed by the algorithm before),
together with previously unused edges
from a heavy color class,
at least one of which is colored under $\varphi$,
which together form a bicolored $4$-cycle,
and then interchange colors on this
$4$-cycle. Thus we shall construct a sequence of colorings
$f_1,\dots, f_q$, where $f_{i+1}$ is obtained from $f_i$ by interchanging
colors on a bicolored $4$-cycle, and
$f_q$ is the required coloring $f'$ where every color class contains at most
$\frac{7d}{8}$ $\varphi$-colored edges.
Note that since $Q_d$ contains at most $d^2/8$ $\varphi$-colored edges,
$q \leq d^2 /8$.

We now give a brief counting argument for showing that as long
as there is a heavy color class, there is a $4$-cycle in the
current coloring $f_i$ so that after interchanging colors
on this $4$-cycle, the obtained coloring $f_{i+1}$ contains fewer or equally
many heavy color classes, but in the latter case one heavy color class 
contains fewer
$\varphi$-colored edges.

Suppose that $Q_d$ initially contains $k$ heavy color classes
under the coloring $f$, where
$k \leq d$, and that exactly $\alpha(d)$ $\varphi$-colored edges are
not contained in the heavy $k$ color classes in $Q_d$,
where $\alpha(d) \leq d^2/8$ is some function of $d$.
Consider a color class $M$ that is heavy under $f_i$.
Suppose that $M$ (initially) contains $\beta(d)$ $\varphi$-colored edges,
where $\beta(d) \leq d^2/8$ is some function of $d$.
By Lemma \ref{lem:4cycle}, every edge in $Q_d$ is contained in $d-1$
$2$-colored $4$-cycles under $f$, so initially there are at least
$$\frac{(d-k)}{2}\beta(d) -  \alpha(d)$$
$4$-cycles containing edges from $M$ that may be used by the algorithm,
because every $\varphi$-colored edge of a heavy color class is contained in
$(d-k)$ $4$-cycles, where two edges are in a light color class,
and up to $\alpha(d)$
such cycles are unavailable since they contain a $\varphi$-colored
edge of a light color class.

Now, after performing some steps of this algorithm
we might have used edges from some of these cycles in some steps of 
the algorithm. Suppose that the algorithm have used 
\begin{itemize}

\item
$s$ $4$-cycles $C$
with two edges from $M$, such that both edges from $M$ in $C$ are
$\varphi$-colored, and 

\item $r$ $4$-cycles $C$
with two edges from $M$, such that one edge from $M$ in $C$ is
$\varphi$-colored.

\end{itemize}
Moreover, since $Q_d$ contains at most $d^2/8$ $\varphi$-precolored edges
and a light color class contains at most $\frac{7d}{8}-2$ $\varphi$-precolored
edges, we might be unable to use $4$-cycles with edges from at most
$$\frac{2\left(\frac{d^2}{8}-\alpha(d)\right)+ \alpha(d)}
{\frac{7d}{8}-1} \leq \frac{32d}{111}$$
of the $d-k$ initially light color classes,
because such a color class contains at least ${\frac{7d}{8}-1}$
$\varphi$-colored edges after performing some steps of the algorithm,
and, in light of Lemma \ref{lem:onecolor}, we may assume that
$d \geq 16$.

Furthermore,
since
every $4$-cycle that has been used by the algorithm
contains two edges from a light color class, at most 
$$2\left(\frac{d^2}{8} - \alpha(d)-
\frac{7}{8}dk \right)$$
$4$-cycles $C$ are unavailable because $C$ contains an edge from a light color class
that was used previously in another $4$-cycle.
Similarly, for every edge from $M$, there are at most
$b(d)$ parallel edges that are $\varphi$-colored, so at most
$$2b(d) (s+r)$$
$4$-cycles $C$ are unavailable because it contains an edge from $M$ 
that was used previously in another $4$-cycle by the algorithm.
Consequently, if
\begin{align}
\label{majoreq}
\frac{\left(d-k-\frac{32d}{111}\right)}{2}(\beta(d) -2s-r ) - \alpha(d)
- 2\left(\frac{d^2}{8} - \alpha(d)-
\frac{7}{8}dk\right)
-2b(d) (s+r) \geq 1
\end{align}
then we can perform all the necessary steps in the algorithm and thus
the required coloring $f'$ exists.

Now, since $M$ is a heavy color class, 
$\beta(d) \geq \frac{7d}{8}$, and by definition $\beta(d) \leq a(d)$. 
Moreover,
since each color class may contain up to $\frac{7d}{8}$ $\varphi$-colored
edges when the algorithm terminates, 
we have that $2s+r \leq \beta(d) - \frac{7d}{8}$, thus
\eqref{majoreq} holds if

\begin{align}
\label{eq2}
\frac{109d^2}{1776} - k \frac{7d}{8} + \alpha(d) + \frac{7}{4}dk
- 2b(d)\left(a(d) - \frac{7d}{8}\right) \geq 1 
\end{align}

Now, by assumption, 
$\frac{109}{1776} d^2 - 2 b(d)\left(a(d) - \frac{7d}{8}\right)\geq 0$,
so \eqref{majoreq} does indeed hold.

\bigskip
Let us now prove part (ii). The proof of this part is similar to the proof of
part (i). We shall prove that we can perform all the necessary steps in the
algorithm described above, and choose each $4$-cycle $C$ that 
is used by the algorithm
in such a way that for each of the edges of $C$ 
that belongs to a heavy color class, there are at most 
$\frac{d}{34C_1}$ parallel unused edges
that are $\varphi$-colored.
Part (ii) of the theorem then holds if \eqref{majoreq} is valid
under the assumptions that
$a(d) =C_1d$ and $b(d) =\frac{d}{34C_1}$.
Since $\frac{109}{1776} > \frac{1}{17}$, this, in turn, 
follows from the fact that \eqref{eq2} holds, given that
$a(d) =C_1d$ and $b(d) =\frac{d}{34C_1}$.

Our task is thus to prove that in each step of the algorithm, we can select
a $4$-cycle so that each of the edges from the heavy color class are parallel
with at most $\frac{d}{34C_1}$ unused $\varphi$-colored edges.

So suppose that some steps of the algorithm have been performed
and we have selected some $4$-cycles satisfying this condition.
Then, since \eqref{majoreq} holds, there is some $4$-cycle
$C=uvxyu$ that is edge-disjoint from all previously considered $4$-cycles
and such that $uv$ and $xy$ are edges from some heavy color class, at least
one of which is $\varphi$-colored, and the edges $vx$ and $yu$ are not 
$\varphi$-colored and lie in a color class that is light
under the current coloring $f_i$. Suppose that one of the
edges $uv$ and $xy$, $uv$ say,
are parallel with at least $\frac{d}{34C_1}$ unused  $\varphi$-colored edges. 
Denote by $M_1$ the dimensional matching containing $uv$ and
consider the set
$E' \subseteq M_1$ of all these $\varphi$-colored edges
that are parallel to $uv$.
At most $\frac{d}{34(C_1+2)}$ of the edges in $E'$ are parallel with at least 
$\frac{d}{34C_1}$ $\varphi$-colored edges
edges, because any edge (except $uv$) that is parallel with an edge from $E'$ is
parallel with at most one other edge from $E'$ and
$\frac{d}{34C_1}\frac{d}{34(C_1+2)}\frac{1}{2} = \frac{d^2}{C_2}$, and $Q_d$
contains altogether at most $\frac{d^2}{C_2}$ $\varphi$-colored edges.

Let $E'' \subseteq E'$ be the set of edges that are
parallel with $uv$ and which
are parallel with at most $\frac{1}{34C_1}$ $\varphi$-colored edges. Then
$$|E''| \geq \frac{d}{34C_1} - \frac{d}{34(C_1+2)} = \frac{4d}{C_2}.$$

Next, we shall estimate the number of $4$-cycles with unused edges,
that contains exactly one edge
from $E''$, and two edges from a light color class,
and satisfying that two cycles containing different edges from $E''$
are disjoint.
Now, since $|E''| \geq \frac{4d}{C_2}$ and any edge that is parallel with an
edge from $E'$ are contained in at most $2$ such $4$-cycles,
arguing as above, we deduce that
there are at least
$$\frac{d-k-\frac{d}{C_2-1}}{2}\frac{4d}{C_2} - 
\alpha(d) - \left(d\frac{d}{C_2}-\alpha(d) 
- kd \frac{C_2-1}{C_2}\right) \geq \frac{d^2(C_2-3)}{C_2(C_2-1)}$$
such unused cycles. Denote the set of all such cycles by $\mathcal{C}$.

By construction, all the edges of $E''$ that are in cycles in $\mathcal{C}$
are parallel with at most $\frac{d}{34C_1}$ unused $\varphi$-colored edges. 
We shall
prove that this holds for both edges of $M_1$ in at least one of the
cycles of $\mathcal{C}$.

Consider a cycle $C = abcda \in \mathcal{C}$, where $ab \in E''$,
$cd \in M_1 \setminus E''$, the edges $ua$ and $bv$ are contained
in the dimensional matching $M_{i}$, and the edges $bc$ and $ad$
are contained in the dimensional matching $M_j$.
Now, if $cd$ is parallel with at least $\frac{d}{34C_1}$ 
unused $\varphi$-precolored edges,
then there are at least $\frac{d}{34C_1}-2$ such edges $c'd' \in M_1$, where
$cc' \in E(Q_d)$ and $dd' \in E(Q_d)$, such that $cc', dd' \notin M_i$.
Suppose, for instance, $cc',dd' \in M_k$, where $k \neq i$.
Then, since $i \neq k$, and there are six permutations of the
matchings $M_i, M_j,M_k$,
it follows from Proposition \ref{prop:HavelMoravek},
that there are at most $5$ other cycles from $\mathcal{C}$ that
contain an edge which is parallell with $c'd'$.
Summing up, we conclude that if all cycles in $\mathcal{C}$ contains
an edge from $M_1$ that is parallel with at least $\frac{d}{34C_1}$ 
unused $\varphi$-colored
edges, then $Q_d$ contains at least
$$\frac{d^2(C_2-3)}{C_2(C_2-1)}\left(\frac{d}{34C_1} -2\right) \frac{1}{6}$$
$\varphi$-colored edges. However, 
by Lemma \ref{lem:onecolor}, we may assume that $d \geq 2C_2$, so
this is not possible since $Q_d$ contains
at most $\frac{d^2}{C_2}$ precolored edges.
We conclude that at least one cycle in $\mathcal{C}$ satisfies that every 
edge from $M_1$ is parallel with at most $\frac{d}{34C_1}$ 
other $\varphi$-precolored edges.
Consequently, we can perform all the necessary steps in the algorithm
to obtain the required coloring $f'$.
\end{proof}

It is trivial that Conjecture \ref{conj:general} is true
in the case when only one color appears in the coloring that is
to be avoided; the case of two involved colors is also
straightforward.
We give a short argument showing
that Conjecture \ref{conj:general} holds
in the case when the partial coloring uses at most three colors.

\begin{proposition}
\label{prop:4col}
	If $\varphi$ is a partial edge coloring of $Q_d$ with at most three colors
	and every color appears on at most $d-2$ edges,
	then $\varphi$ is avoidable.
\end{proposition}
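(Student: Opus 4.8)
The plan is to start from the standard edge coloring $f$ of $Q_d$, whose color classes are exactly the dimensional matchings $M_1,\dots,M_d$, and to reduce the problem to finding a perfect matching in an auxiliary bipartite graph, exactly as in the proof of Lemma \ref{lem:onecolor}. We may assume the three colors used by $\varphi$ are $1,2,3$. Build the bipartite graph $B(f)$ with the matchings $M_1,\dots,M_d$ on one side and the colors $1,\dots,d$ on the other, joining $M_i$ to $c$ precisely when $M_i$ contains no $\varphi$-colored edge of color $c$; a perfect matching of $B(f)$ lets us recolor each class $M_i$ by its matched color and thereby produce a proper $d$-edge coloring avoiding $\varphi$. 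The key structural point is that, since $\varphi$ uses only colors $1,2,3$, every matching is automatically joined to all of $\{4,\dots,d\}$, so a matching can fail to be joined only to a color in $\{1,2,3\}$, and only if it actually carries a $\varphi$-colored edge of that color.

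The first main step is a Hall-type count. For $c\in\{1,2,3\}$ let $F_c$ denote the set of dimensional matchings containing a $\varphi$-colored edge of color $c$; since color $c$ occurs on at most $d-2$ edges we have $|F_c|\le d-2$. For any nonempty $S$ of matchings one checks that $|N(S)| = d - |\{c\in\{1,2,3\}: S\subseteq F_c\}|$, because $S$ fails to reach $c$ exactly when every matching in $S$ forbids $c$. Running through the three possible sizes of this index set, a violation of Hall's condition can occur only when $S\subseteq F_1\cap F_2\cap F_3$ and $|S|\ge d-2$; combined with $|F_c|\le d-2$ this forces the single \emph{bad configuration} $F_1=F_2=F_3=S$ with $|S|=d-2$. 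In that configuration each of the $d-2$ matchings in $S$ carries exactly one $\varphi$-colored edge of each of the colors $1,2,3$, while the two matchings outside $S$ are completely uncolored.

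The second main step is to repair the bad configuration with a single interchange. I would pick a matching $M_a\in S$, its unique $\varphi$-colored edge $e$ of color $1$, and one of the two uncolored matchings $M_b\notin S$, and then interchange colors $a$ and $b$ on the unique bicolored $4$-cycle of $Q_d[M_a\cup M_b]$ through $e$ (which exists and is unique by Lemma \ref{lem:DimMathInduce}). The effect is that $e$ moves into class $b$, so the new class $a'$ loses its only color-$1$ edge and is freed from color $1$, while class $b'$ picks up $e$ (and the partner edge of the cycle). Recomputing the forbidden sets $F_1',F_2',F_3'$ for the new coloring, one verifies that $a'\notin F_1'$ and $b'\notin F_2'\cup F_3'$, so $|F_1'\cap F_2'\cap F_3'|\le d-3$; the Hall count of the first step then shows no deficient set of size $d-2$ survives, $B$ now has a perfect matching, and the resulting recoloring avoids $\varphi$ since an edge $\varphi$-colored $c$ lies in a class forbidden from $c$.

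The step I expect to be the main obstacle is the bookkeeping in this repair, specifically handling the partner edge $e'$ of the interchange cycle, which need not be uncolored: since $M_a$ also carries $\varphi$-colored edges of colors $2$ and $3$, the edge $e'$ could itself be one of them. One must therefore check each subcase ($e'$ uncolored, or $\varphi$-colored $2$, or $\varphi$-colored $3$) and confirm that in every case the triple intersection $F_1'\cap F_2'\cap F_3'$ drops from $d-2$ to at most $d-3$, which is exactly what restores Hall's condition. The small cases $d\le 2$ are vacuous and $d=3$ reduces to Lemma \ref{lem:onecolor}, so these can be disposed of at the outset.
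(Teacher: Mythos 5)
Your proposal is correct and takes essentially the same route as the paper: the standard edge coloring, Hall's condition on the class--color bipartite graph, identification of the unique deficient configuration ($d-2$ dimensional matchings each carrying exactly one edge of each of the three colors, two uncolored matchings), and a single $4$-cycle interchange into an uncolored matching to destroy it. Two cosmetic remarks: your uniform interchange argument in fact also covers $d=4$, which the paper instead handles by deleting a dimensional matching and reducing to two copies of $Q_3$; and the claim $b'\notin F_2'\cup F_3'$ should read $b'\notin F_2'\cap F_3'$, which is precisely what your subcase analysis of the partner edge establishes and is all that is needed for $|F_1'\cap F_2'\cap F_3'|\le d-3$.
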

\begin{proof}
	It is a simple exercise to show that the result holds in the case when $d=3$.
	Thus, we may assume that $d \geq 4$ and that exactly three
	colors appear in the partial coloring $\varphi$.
	
	Let $f$ be standard edge coloring of $Q_d$, 
	and consider the bipartite graph $B(f)$
	with parts consisting of the
	color classes $C(f)$ of $f$ and the colors $\{1,\dots,n\}$
	used in $\varphi$, and
	where an edge appears between a color $i$ of $\varphi$ and a color
	class $M_j$ of $f$ if and only if no edge of $M_j$ is colored $i$ 
	under $\varphi$.
	
	As in the proof of the preceding theorem,
	if there is a perfect matching in $B(f)$, then the coloring $\varphi$
	is avoidable, so suppose that this is not the case. Then
	there is an anti-Hall set $S \subseteq C(f)$, that is,
	a set $S \subseteq C(f)$, such that $|N(S)| < |S|$.
	Our goal is to prove that there is a coloring $f'$ that
	can be obtained from $f$ by interchanging colors on some $4$-cycles,
	so that in the bipartite graph $B(f')$, defined as above, there
	is a perfect matching.
	
	Now, if $S$ is a anti-Hall set, then
	since every color in $\varphi$ appears at most $d-2$ times,
	$|S| \leq d-2$. On the other hand, since at most $3$ colors
	appear in the coloring $\varphi$, $|N(S)| \geq d-3$, so
	$|S| \geq d-2$; consequently, $|S| = d-2$, that is,
	every dimensional matching in $S$ contains edges of
	all three colors under $\varphi$, and thus there are two 
	dimensional matchings in $Q_d$
	where no edges are colored under $\varphi$. 
	Without loss of generality, we assume that $M_1$ is a 
	dimensional matching
	with color $1$ under $f$ that is in $S$. 
	If $d \geq 5$, then we pick a dimensional
	matching, $M_d$, with color $d$ under $f$, say, not contained in the
	set $S$. Now, since $M_d$ contains no $\varphi$-colored edges
	and $M_1$ contains $d-2$ such edges, there is a $4$-cycle in 
	the edge-induced subgraph $Q_d[M_1 \cup M_d]$
	containing at least one $\varphi$-colored edge. Since $d \geq 5$, 
	by interchanging colors on this $4$-cycle, we obtain the
	required coloring $f'$.
	
	It remains to consider the case when $d=4$. Let $M_1$ be a dimensional
	matching in $S$; then $M_1$ contains exactly one edge $\varphi$-colored
	$i$, for $i=1,2,3$. The graph $Q_d -M_1$, consisting of two copies of the
	graph $Q_3$, thus contains exactly one edge colored $i$, $i=1,2,3$; so
	by our initial observation, there is a proper $3$-edge coloring of 
	$Q_d-M_1$ that avoids the restriction of $\varphi$ to $Q_d-M_1$.
	Now, by assigning color $4$ to $M_1$, we obtain a proper $d$-edge coloring
	of $Q_d$ that avoids $\varphi$.
\end{proof}

\begin{remark}
	We remark that by using the same strategy it is straightforward to prove
	a version of the preceding result with four instead of three colors, provided
	that $d \geq 5$; indeed, the only essential difference is that one has to consider
	two different cases on the size of the anti-Hall set, namely, when it has size 
	$d-2$ and $d-3$, respectively.
	However, for the case when $d=4$, the only proof we have proceeds by long and
	detailed case analysis, so
	we abstain from giving the details in the case when the coloring to be avoided
	contains four different colors.
\end{remark}

As a final observation of this section, let us consider the case when all precolored
edges lie in a hypercube of dimension $d-1$ contained in a $d$-dimensional 
hypercube.

The following was first conjectured in \cite{Johansson}.

\begin{proposition}
	If $\varphi$ is a partial $d$-edge coloring of the hypercube $Q_d$ ($d \geq 2$),
	where all colored edges
	lie in a subgraph that is isomorphic to $Q_{d-1}$, then $\varphi$ is avoidable.
\end{proposition}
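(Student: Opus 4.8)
The plan is to reduce the whole problem to colouring a single facet and then to mirror that colouring across to the opposite facet. Regard the copy of $Q_{d-1}$ carrying the precoloured edges as a facet $H_0$ (a subcube obtained by fixing one coordinate); then by Lemma~\ref{lem:matchings} we may write $Q_d = H_0 \cup H_1 \cup M$, where $H_1$ is the opposite facet, $M$ is the dimensional matching joining each vertex $x\in H_0$ to its partner $x'\in H_1$, and every $\varphi$-colored edge lies in $H_0$. The point of this decomposition is that no avoidance constraint touches $H_1$ or $M$, so all the work happens inside $H_0\cong Q_{d-1}$, where we have one spare color compared with the degree.

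First I would colour $H_0$ using Galvin's theorem. Assign to each edge $e$ of $H_0$ the list $\mathcal{L}(e)=\{1,\dots,d\}\setminus\{\varphi(e)\}$ if $e$ is $\varphi$-colored, and $\mathcal{L}(e)=\{1,\dots,d\}$ otherwise. Since $H_0\cong Q_{d-1}$ is bipartite and $(d-1)$-regular we have $\chi'(H_0)=d-1$, while every list has size at least $d-1$; hence by Theorem~\ref{th:Galvin} there is a proper $\mathcal{L}$-coloring $g_0$ of $H_0$. By construction $g_0(e)\neq\varphi(e)$ on every precoloured edge, so $g_0$ avoids $\varphi$. Because $H_0$ is $(d-1)$-regular and $g_0$ draws colors from the $d$-set $\{1,\dots,d\}$, exactly one color $m(x)$ is missing at each vertex $x\in H_0$.

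The key step is that $g_0$ extends to all of $Q_d$ for free, by mirroring. Color the matching edge $xx'$ with $m(x)$, and color each edge $x'y'$ of $H_1$ with $g_0(xy)$, transported through the natural isomorphism $H_0\to H_1$. At $x\in H_0$ the $d-1$ facet edges use $\{1,\dots,d\}\setminus\{m(x)\}$ and the matching edge supplies $m(x)$, so all $d$ colors appear once; at $x'\in H_1$ the $d-1$ facet edges reproduce that same color set $\{1,\dots,d\}\setminus\{m(x)\}$, again completed by $m(x)$ on the matching edge. Thus the construction is a proper $d$-edge coloring of $Q_d$ that agrees with $g_0$ on $H_0$ and therefore avoids $\varphi$. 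The only genuinely delicate point, and the reason the second half needs no further argument, is the observation that the color forced on each matching edge (the unique color missing at its $H_0$-endpoint) is exactly what the mirrored copy needs at its $H_1$-endpoint; this is what lets us bypass a second, harder list-coloring problem on $H_1$, whose edges would otherwise inherit lists of size only $d-2$. The main obstacle to guard against is the initial reduction—verifying that the hosting $Q_{d-1}$ may indeed be taken as a facet so that the clean decomposition $Q_d=H_0\cup H_1\cup M$ is available.
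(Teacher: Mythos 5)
Your proposal is correct and follows essentially the same route as the paper's own proof: the same decomposition of $Q_d$ into the two facets plus the dimensional matching, the same list assignment $\mathcal{L}(e)=\{1,\dots,d\}\setminus\{\varphi(e)\}$ resolved by Galvin's theorem on the precolored facet, and the same mirroring of that coloring to the opposite facet with the matching edges receiving the unique missing color. The only addition is your explicit verification that the missing colors match at both endpoints of each matching edge, which the paper leaves implicit.
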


\begin{proof}
	Let $H_1$ be a subgraph of $Q_{d}$ that is isomorphic to $Q_{d-1}$ and contains
	all precolored edges. Then $Q_d$ consists of the two copies $H_1$ and $H_2$
	of $Q_{d-1}$ and a dimensional matching $M$ joining vertices of $H_1$ and $H_2$.

	We define a list assignment $L$ for $H_1$ by setting 
	$L(e) = \{1,\dots, d\} \setminus \{\varphi(e)\}$, 
	for every edge $e \in E(H_1)$,
	where
	we assume that $\varphi(e) = \emptyset$ if $e$ is not colored under $\varphi$.
	By Galvin's Theorem \ref{th:Galvin}, there is a proper $d$-edge coloring
	of $H_1$ with colors from the lists. Since $H_1$ and $H_2$ are isomorphic,
	this also yields a corresponding $d$-edge coloring of $H_2$.
	By coloring all edges of $M$ by the unique color in $\{1,\dots,d\}$ missing
	at its endpoints, we obtain a proper $d$-edge coloring of $Q_d$ which avoids
	$\varphi$.
\end{proof}


\section{Avoiding partial proper edge colorings}

In \cite{Johansson}, Johansson presented a complete list of minimal unavoidable
partial $3$-edge colorings of $Q_3$, where {\em minimal} means that 
removing a color from any colored edge yields an avoidable edge coloring;
the list is {\em complete} in the sense that it contains all such colorings up to
permuting colors and/or applying graph automorphisms.
There are 29 such configurations,
and we refer to \cite{Johansson} for a comprehensive list of all such colorings.
Let us here just remark that, based on this list of minimal unavoidable partial
edge colorings, it seems to be a difficult task to characterize the family
of unavoidable partial edge colorings of $Q_d$ for general $d$. 
Note further that a similar
investigation for complete bipartite graphs was pursued
in \cite{MarkstromOhman}.

Here, we shall focus on the unavoidable partial proper $3$-edge colorings
of $Q_3$.
As explained in \cite{Johansson}, there are six such minimal configurations.

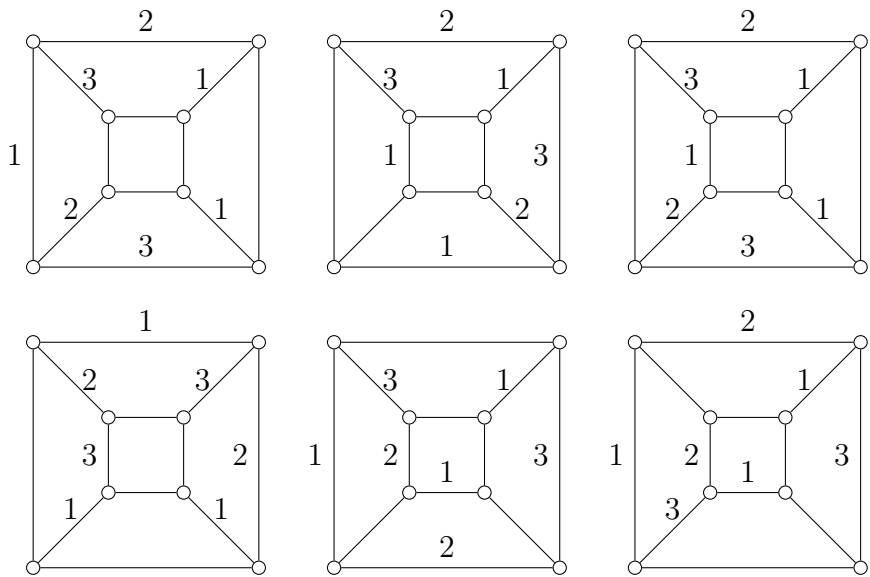
\begin{figure} [H]
	\begin{center}

	\begin{tikzpicture}[scale=0.5]
	\tikzset{vertex/.style = {shape=circle,draw,
												inner sep=0pt, minimum width=5pt}}
	\tikzset{edge/.style = {-,> = latex'}}
	
	\node[vertex] (x_11) at  (-4, 0) {};
	\node[vertex] (x_12) at  (-4, 6) {};
	\node[vertex] (x_13) at  (-2, 2) {};
	\node[vertex] (x_14) at  (-2, 4) {};
	\node[vertex] (x_21) at  (0, 2) {};
	\node[vertex] (x_22) at  (0, 4) {};
	\node[vertex] (x_23) at  (2, 0) {};
	\node[vertex] (x_24) at  (2, 6) {};
	
	\draw[edge] (x_11) to node [midway,left] {$1$} (x_12);
		\draw[edge] (x_11) to node [midway,above] {$2$} (x_13);
	   \draw[edge] (x_11) to node [midway,above] {$3$} (x_23);
   		\draw[edge] (x_12) to node [midway,right] {$3$} (x_14);
		\draw[edge] (x_12) to node [midway,above] {$2$} (x_24);
		\draw[edge] (x_13) to  (x_14);
		\draw[edge] (x_13) to  (x_21);
		\draw[edge] (x_14) to  (x_22);
		\draw[edge] (x_21) to  (x_22);
		\draw[edge] (x_21) to node [midway,above] {$1$} (x_23);
		\draw[edge] (x_22) to node [midway,left] {$1$} (x_24);
	\draw[edge] (x_23) to  (x_24);

	
		\node[vertex] (a_11) at  (4, 0) {};
	\node[vertex] (a_12) at  (4, 6) {};
	\node[vertex] (a_13) at  (6, 2) {};
	\node[vertex] (a_14) at  (6, 4) {};
	\node[vertex] (a_21) at  (8, 2) {};
	\node[vertex] (a_22) at  (8, 4) {};
	\node[vertex] (a_23) at  (10, 0) {};
	\node[vertex] (a_24) at  (10, 6) {};
	
	\draw[edge] (a_11) to (a_12);
		\draw[edge] (a_11) to  (a_13);
	  \draw[edge] (a_11) to node [midway,above] {$1$} (a_23);
   	\draw[edge] (a_12) to node [midway,right] {$3$} (a_14);
		\draw[edge] (a_12) to node [midway,above] {$2$} (a_24);
		\draw[edge] (a_13) to node [midway,left] {$1$} (a_14);
		\draw[edge] (a_13) to  (a_21);
		\draw[edge] (a_14) to  (a_22);
		\draw[edge] (a_21) to  (a_22);
		\draw[edge] (a_21) to node [midway,above] {$2$} (a_23);
		\draw[edge] (a_22) to node [midway,left] {$1$} (a_24);
		\draw[edge] (a_23) to node [midway,left] {$3$} (a_24);


	\node[vertex] (b_11) at  (12, 0) {};
	\node[vertex] (b_12) at  (12, 6) {};
	\node[vertex] (b_13) at  (14, 2) {};
	\node[vertex] (b_14) at  (14, 4) {};
	\node[vertex] (b_21) at  (16, 2) {};
	\node[vertex] (b_22) at  (16, 4) {};
	\node[vertex] (b_23) at  (18, 0) {};
	\node[vertex] (b_24) at  (18, 6) {};
	
	\draw[edge] (b_11) to (b_12);
		\draw[edge] (b_11) to node [midway,above] {$2$} (b_13);
	  \draw[edge] (b_11) to node [midway,above] {$3$} (b_23);
   	\draw[edge] (b_12) to node [midway,right] {$3$} (b_14);
		\draw[edge] (b_12) to node [midway,above] {$2$} (b_24);
		\draw[edge] (b_13) to node [midway,left] {$1$} (b_14);
		\draw[edge] (b_13) to  (b_21);
		\draw[edge] (b_14) to  (b_22);
		\draw[edge] (b_21) to  (b_22);
		\draw[edge] (b_21) to node [midway,above] {$1$} (b_23);
		\draw[edge] (b_22) to node [midway,left] {$1$} (b_24);
		\draw[edge] (b_23) to (b_24);
	
	
	\node[vertex] (c_11) at  (-4, -8) {};
	\node[vertex] (c_12) at  (-4, -2) {};
	\node[vertex] (c_13) at  (-2, -6) {};
	\node[vertex] (c_14) at  (-2, -4) {};
	\node[vertex] (c_21) at  (0, -6) {};
	\node[vertex] (c_22) at  (0, -4) {};
	\node[vertex] (c_23) at  (2, -8) {};
	\node[vertex] (c_24) at  (2, -2) {};
	
	\draw[edge] (c_11) to (c_12);
		\draw[edge] (c_11) to node [midway,above] {$1$} (c_13);
	   \draw[edge] (c_11) to (c_23);
   		\draw[edge] (c_12) to node [midway,right] {$2$} (c_14);
		\draw[edge] (c_12) to node [midway,above] {$1$} (c_24);
		\draw[edge] (c_13) to node [midway,left] {$3$} (c_14);
		\draw[edge] (c_13) to  (c_21);
		\draw[edge] (c_14) to  (c_22);
		\draw[edge] (c_21) to  (c_22);
		\draw[edge] (c_21) to node [midway,above] {$1$} (c_23);
		\draw[edge] (c_22) to node [midway,left] {$3$} (c_24);
		\draw[edge] (c_23) to node [midway,left] {$2$} (c_24);

	
		\node[vertex] (d_11) at  (4, -8) {};
	\node[vertex] (d_12) at  (4, -2) {};
	\node[vertex] (d_13) at  (6, -6) {};
	\node[vertex] (d_14) at  (6, -4) {};
	\node[vertex] (d_21) at  (8, -6) {};
	\node[vertex] (d_22) at  (8, -4) {};
	\node[vertex] (d_23) at  (10, -8) {};
	\node[vertex] (d_24) at  (10, -2) {};
	
	\draw[edge] (d_11) to node [midway,left] {$1$} (d_12);
		\draw[edge] (d_11) to  (d_13);
	  \draw[edge] (d_11) to node [midway,above] {$2$} (d_23);
   	\draw[edge] (d_12) to node [midway,right] {$3$} (d_14);
		\draw[edge] (d_12) to (d_24);
		\draw[edge] (d_13) to node [midway,left] {$2$} (d_14);
		\draw[edge] (d_13) to  node [midway,above] {$1$}(d_21);
		\draw[edge] (d_14) to  (d_22);
		\draw[edge] (d_21) to  (d_22);
		\draw[edge] (d_21) to (d_23);
		\draw[edge] (d_22) to node [midway,left] {$1$} (d_24);
		\draw[edge] (d_23) to node [midway,left] {$3$} (d_24);


	\node[vertex] (e_11) at  (12, -8) {};
	\node[vertex] (e_12) at  (12, -2) {};
	\node[vertex] (e_13) at  (14, -6) {};
	\node[vertex] (e_14) at  (14, -4) {};
	\node[vertex] (e_21) at  (16, -6) {};
	\node[vertex] (e_22) at  (16, -4) {};
	\node[vertex] (e_23) at  (18, -8) {};
	\node[vertex] (e_24) at  (18, -2) {};
	
	\draw[edge] (e_11) to node [midway,left] {$1$} (e_12);
		\draw[edge] (e_11) to node [midway,above] {$3$} (e_13);
	  \draw[edge] (e_11) to (e_23);
   	\draw[edge] (e_12) to (e_14);
		\draw[edge] (e_12) to node [midway,above] {$2$} (e_24);
		\draw[edge] (e_13) to node [midway,left] {$2$} (e_14);
		\draw[edge] (e_13) to node [midway,above] {$1$} (e_21);
		\draw[edge] (e_14) to  (e_22);
		\draw[edge] (e_21) to  (e_22);
		\draw[edge] (e_21) to (e_23);
		\draw[edge] (e_22) to node [midway,left] {$1$} (e_24);
		\draw[edge] (e_23) to node [midway,left] {$3$} (e_24);

\end{tikzpicture}
\end{center}
\caption{Minimal unavoidable partial proper $3$-edge colorings of $Q_3$.}
	\label{fig:Q3prop}
\end{figure}

\begin{proposition}
\label{prop:Q3}
	The partial edge colorings of $Q_3$ in Figure \ref{fig:Q3prop} constitute
	a complete list of minimal unavoidable partial proper $3$-edge colorings of $Q_3$.
\end{proposition}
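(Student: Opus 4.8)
The plan is to reduce the statement to a finite verification together with an exhaustive search, both carried out modulo the symmetries of the problem. First I would pin down, once and for all, the complete list of proper $3$-edge colorings of $Q_3$. Any proper $3$-edge coloring is a partition of $E(Q_3)$ into three perfect matchings; removing one perfect matching $M$ leaves a $2$-regular bipartite graph, so $Q_3 - M$ is a disjoint union of even cycles, and the number of ways to split the remaining edges into the other two color classes is $2^{c}$, where $c$ is the number of cycles. Distinguishing the two types of perfect matchings---the $3$ dimensional ones, for which $Q_3-M$ is two $4$-cycles by Lemma \ref{lem:DimMathInduce}, and the $6$ non-dimensional ones, for which $Q_3-M$ is a single $8$-cycle---gives exactly $3\cdot 2^2 + 6\cdot 2 = 24$ proper $3$-edge colorings, falling into $4$ one-factorizations up to permuting colors.

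Next I would reformulate avoidability in a way convenient for searching. Encode a partial proper coloring $\varphi$ as the set of pairs $\{(e,\varphi(e)) : e \text{ is } \varphi\text{-colored}\}$, and each of the $24$ colorings $f$ as $A_f = \{(e,f(e)) : e \in E(Q_3)\}$. Then $f$ avoids $\varphi$ precisely when $\varphi \cap A_f = \emptyset$, so $\varphi$ is unavoidable if and only if $\varphi$ meets every $A_f$. Thus the unavoidable colorings are exactly the proper partial colorings that form a transversal of the $24$ sets $A_f$, and the minimal unavoidable ones are the minimal such transversals.

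With this in hand, verifying that each of the six configurations in Figure \ref{fig:Q3prop} is unavoidable amounts to checking, for each of the $24$ colorings $f$ (equivalently, for the $4$ one-factorizations together with their color permutations), that $f$ agrees with the configuration on at least one edge; and verifying minimality amounts to checking, for each colored edge $e$, that uncoloring $e$ yields a configuration met by some $f$, i.e.\ exhibiting one avoiding coloring. Both are routine finite checks. I would moreover record that, by Lemma \ref{lem:onecolor}, a coloring in which every color appears on at most one edge is avoidable; this both forces any minimal unavoidable coloring to place some color on at least two edges and shortens several of the minimality checks.

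The main work, and the principal obstacle, is completeness: proving that every minimal unavoidable partial proper $3$-edge coloring is equivalent, under the action of $\operatorname{Aut}(Q_3)$ (of order $48$) together with the $S_3$-action permuting colors, to one of the six displayed configurations. I would carry this out as an exhaustive search over the proper partial colorings that are transversals of the $24$ sets $A_f$, pruned heavily by the symmetry group (of order up to $288$) and by the multiplicity bound from Lemma \ref{lem:onecolor}. This is a finite but sizable case analysis; it is precisely the enumeration underlying the list in \cite{Johansson}, and the honest way to present it is as a (computer-assisted) search, with the reduction modulo symmetry doing the heavy lifting to bring the number of genuinely distinct cases down to a manageable size.
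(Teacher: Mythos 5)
Your proposal is correct and is essentially the same approach as the paper, whose entire proof of Proposition \ref{prop:Q3} is a one-line appeal to an exhaustive computer search (deferring details to \cite{Johansson}). Your framing — enumerating the $24$ proper $3$-edge colorings of $Q_3$ via the $3\cdot 2^2+6\cdot 2$ count, recasting unavoidability as being a transversal of the sets $A_f$, and pruning by the symmetry group and Lemma \ref{lem:onecolor} — is an accurate and somewhat more explicit description of how that finite verification is organized.
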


The proof of this proposition is by an exhaustive computer search; we refer
to \cite{Johansson} for details.
		
As in the non-proper case,
based on this list of minimal unavoidable partial proper $3$-edge colorings of $Q_3$,
it seems difficult to make any specific conjecture as to whether it is possible to 
characterize the minimal unavoidable partial proper $d$-edge colorings of $Q_d$
for general $d$.
It is, however, easy to construct infinite families of 
minimal unavoidable partial (non-proper) $3$-edge colorings
of hypercubes; for the case when the coloring is required to be proper, 
this problem appears to be more difficult; 
in fact, we are interested in whether
the following might be true:

\begin{problem}
\label{prob:proper}
	Is there an integer $d_0 \geq 0$ such that every partial
	proper $d$-edge coloring of
	$Q_d$ is avoidable if $d \geq d_0$?
\end{problem}

As mentioned in the introduction above, for the balanced
complete bipartite graphs
the answer to the corresponding question
is positive and it suffices to require that the
graph has at least $8$ vertices \cite{Cavenagh, ChetwyndRhodes, Ohman}.


\bigskip

Next, we shall deduce some general consequences
of Proposition \ref{prop:Q3}. We begin by considering the
special case of Problem \ref{prob:proper} when all
colored edges are contained in a matching. We shall need the following lemmas,
which are immediate from Proposition \ref{prop:Q3}.

\begin{lemma}
\label{lem:match}
	If $\varphi$ is a partial $3$-edge coloring $Q_3$ where all colored
	edges are contained in a matching,
	then $\varphi$ is avoidable.
\end{lemma}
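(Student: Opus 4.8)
The plan is to leverage Proposition \ref{prop:Q3}, which lists all six minimal unavoidable partial proper $3$-edge colorings of $Q_3$ shown in Figure \ref{fig:Q3prop}. Since $\varphi$ is by hypothesis a proper coloring whose colored edges form a matching, the key observation is that none of the six minimal unavoidable configurations has this property: in each of the six pictured colorings, there is at least one pair of colored edges sharing a common endpoint. If I can verify this claim by inspecting the figure, then $\varphi$ cannot contain any of the minimal unavoidable configurations as a sub-precoloring, and hence $\varphi$ must itself be avoidable (any unavoidable proper precoloring must contain one of the six minimal ones, by the completeness asserted in Proposition \ref{prop:Q3}).

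Concretely, first I would recall what ``minimal'' and ``complete'' mean here: a partial proper $3$-edge coloring of $Q_3$ is unavoidable if and only if it contains (as a subset of colored edges, up to color permutation and graph automorphism) one of the six configurations in Figure \ref{fig:Q3prop}. So the task reduces to a finite check. I would examine each of the six configurations $(a)$ through $(f)$ in turn and exhibit, in each, two colored edges that are adjacent (share a vertex). For instance, in the first configuration several colored edges emanate from the vertex $x_{11}$ (the edges colored $1$, $2$, and $3$ at that vertex), so those colored edges are certainly not a matching; a similar incident pair can be pointed to in each of the remaining five pictures.

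Having established that every minimal unavoidable proper precoloring of $Q_3$ has two adjacent colored edges, the conclusion is immediate: if $\varphi$'s colored edges form a matching, then $\varphi$ contains no two adjacent colored edges, so $\varphi$ cannot contain any of the six minimal configurations, whence $\varphi$ is avoidable. This completes the argument.

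The main obstacle I anticipate is purely a matter of careful bookkeeping rather than genuine mathematical difficulty: I must be certain that the ``containment'' relation used implicitly in Proposition \ref{prop:Q3} is the right one, namely that every unavoidable proper precoloring contains some minimal one obtained by deleting colors from edges (so that restricting to a matching truly rules out all six), and I must check all six pictures correctly rather than overlooking a configuration whose colored edges happen to form a matching. Since the figure is fixed and finite, this is a routine verification, and I expect no essential complication beyond reading the diagram accurately.
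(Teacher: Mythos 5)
Your proof is correct and follows essentially the same route as the paper, which states Lemma \ref{lem:match} as ``immediate from Proposition \ref{prop:Q3}'': one inspects Figure \ref{fig:Q3prop}, notes that each of the six minimal unavoidable configurations has at least one vertex incident with two (in fact three) colored edges, and concludes that no precoloring whose colored edges form a matching can contain a minimal unavoidable configuration, hence cannot be unavoidable. Your explicit justification of the containment step (an unavoidable precoloring can be reduced, by repeatedly uncoloring edges while preserving unavoidability, to a minimal unavoidable one, and this reduction preserves the matching property) is exactly the reasoning the paper leaves implicit.
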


We note that an analogous statement does not hold for $Q_2$, since the partial coloring
where two non-adjacent edges of $Q_2$ are colored by $1$ and $2$, respectively, 
is unavoidable.

\begin{lemma}
\label{lem:2Match}
	If $\varphi$ is a partial proper $3$-edge coloring of $Q_3$ where all
	colored edges are contained in two dimensional matchings, then $\varphi$
	is avoidable.
\end{lemma}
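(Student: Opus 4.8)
The plan is to derive this lemma as a direct consequence of Proposition \ref{prop:Q3}, which lists all minimal unavoidable partial proper $3$-edge colorings of $Q_3$. The guiding principle is that unavoidability is closed under adding precolored edges: if $\varphi \subseteq \psi$ and some proper coloring avoids $\psi$, then that same coloring avoids $\varphi$, so $\psi$ unavoidable would force $\varphi$ unavoidable. Consequently a partial proper coloring is avoidable if and only if it does \emph{not} contain any of the six configurations of Figure \ref{fig:Q3prop} as a subcoloring, up to a graph automorphism of $Q_3$ and a permutation of the colors.

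The key step I would establish is a purely structural property of Figure \ref{fig:Q3prop}: in each of the six configurations, the colored edges meet all three dimensional matchings of $Q_3$. Reading the figure off directly, each configuration precolors exactly seven edges, and in every one of the six cases these seven edges include at least one edge from each of the three directions of the cube. I would note explicitly that a cardinality argument alone does not suffice here, since two dimensional matchings contain eight edges in total and could in principle accommodate seven precolored edges; the content lies precisely in the spanning property. This is a finite inspection, performed once for each of the six pictures.

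Next I would record that the automorphism group of $Q_d$ acts on the set of dimensional matchings: its generators are the coordinate permutations, which permute the directions, and the coordinate complementations, which fix each dimensional matching setwise. Hence applying any automorphism to one of the six configurations again yields a precoloring whose colored edges meet all three dimensional matchings, and permuting the colors does not alter which edges are colored at all. Thus the ``meets all three dimensional matchings'' property is invariant under exactly the symmetries relative to which Proposition \ref{prop:Q3} is a complete list.

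Finally I would assemble these observations. Suppose, for contradiction, that $\varphi$ has all its colored edges contained in two dimensional matchings yet is unavoidable. By Proposition \ref{prop:Q3}, some image of one of the six configurations under an automorphism and a color permutation is a subcoloring of $\varphi$; by the two preceding steps this subcoloring already has colored edges in all three dimensional matchings, and therefore so does $\varphi$, contradicting the hypothesis. Hence $\varphi$ is avoidable. The only genuinely laborious part is the finite verification of the spanning property across Figure \ref{fig:Q3prop}, which is the step most exposed to a bookkeeping error; the remainder is a short logical deduction.
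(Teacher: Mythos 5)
Your proposal is correct and follows essentially the same route as the paper, which states this lemma (together with Lemma \ref{lem:match}) as an immediate consequence of Proposition \ref{prop:Q3}: since every unavoidable partial proper $3$-edge coloring of $Q_3$ must contain one of the six minimal configurations up to symmetry, and each of those seven-edge configurations meets all three dimensional matchings (a property preserved by automorphisms and color permutations), no such configuration fits inside two dimensional matchings. Your explicit remark that the cardinality bound $7\leq 8$ alone would not suffice, so that the direction-spanning inspection is the real content, is a worthwhile clarification of what the paper leaves implicit.
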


%


\begin{corollary}
\label{cor1}
	If $d=3k$ and $\varphi$ is a partial $d$-edge coloring of $Q_d$ where all 
	colored edges
	are contained in a matching, then $\varphi$ is avoidable.
\end{corollary}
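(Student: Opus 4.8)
The plan is to reduce the problem to the base case $d=3$ handled by Lemma~\ref{lem:match}, by slicing $Q_d$ into copies of $Q_3$. Write $d=3k$ and let $M_1,\dots,M_d$ be the $d$ dimensional matchings that decompose $Q_d$ (Lemma~\ref{lem:matchings}). Group them into $k$ triples $T_j=\{M_{3j-2},M_{3j-1},M_{3j}\}$ for $j=1,\dots,k$. By Lemma~\ref{lem:DimMathInduce}, the subgraph $Q_d[M_{3j-2}\cup M_{3j-1}\cup M_{3j}]$ induced by the matchings in $T_j$ is a disjoint union of copies of $Q_3$. Since each edge of $Q_d$ lies in exactly one $M_i$, hence in exactly one triple and therefore in exactly one of these $Q_3$-components, the collection of all $Q_3$-components arising from $T_1,\dots,T_k$ partitions $E(Q_d)$.

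First I would fix, for each triple $T_j$, the palette $P_j=\{3j-2,3j-1,3j\}$ together with the natural bijection $P_j\leftrightarrow\{1,2,3\}$; the palettes $P_1,\dots,P_k$ are pairwise disjoint and exhaust $\{1,\dots,d\}$. I then set up an avoidance problem on each $Q_3$-component $H$ of $T_j$ as follows. For a $\varphi$-colored edge $e$ of $H$, if $\varphi(e)\in P_j$ I record the corresponding color in $\{1,2,3\}$ as a precolor of $e$, and if $\varphi(e)\notin P_j$ I leave $e$ uncolored in the local problem. This yields a partial $3$-edge coloring $\varphi_H$ of $H$. Crucially, because the $\varphi$-colored edges form a matching in $Q_d$, their restriction to $H$ is again a matching, so $\varphi_H$ has all colored edges contained in a matching, and Lemma~\ref{lem:match} produces a proper $3$-edge coloring $f_H$ of $H$ that avoids $\varphi_H$.

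To assemble the global coloring $f$, I would recolor each $f_H$ through the bijection $\{1,2,3\}\leftrightarrow P_j$ and take $f$ to be the union over all components of all triples; this is well defined since the components partition $E(Q_d)$. Properness follows because at any vertex $v$ the three edges of $v$ lying in $T_j$ receive three distinct colors of $P_j$, and distinct triples use disjoint palettes, so the $d$ edges at $v$ get $d$ distinct colors. For avoidance, consider a $\varphi$-colored edge $e$, lying in some component $H$ of $T_j$: if $\varphi(e)\in P_j$ then $f_H$ avoided the recorded precolor and hence $f(e)\neq\varphi(e)$; if $\varphi(e)\notin P_j$ then $f(e)\in P_j$ forces $f(e)\neq\varphi(e)$ for free. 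The argument is essentially bookkeeping once the slicing is in place; the only point requiring care---and the main thing to get right---is the treatment of a precolor $\varphi(e)$ that falls outside the palette $P_j$ of its triple, which is exactly why leaving such edges uncolored in the local problem (rather than trying to forbid an unavailable color) is the correct move.
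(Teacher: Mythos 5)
Your proposal is correct and follows essentially the same route as the paper: decompose the $d$ dimensional matchings into $k$ triples, use Lemma~\ref{lem:DimMathInduce} to see each triple induces disjoint copies of $Q_3$, apply Lemma~\ref{lem:match} componentwise with disjoint palettes, and combine. The paper states this more tersely; your explicit handling of precolors falling outside a triple's palette (leaving them uncolored locally, so avoidance is automatic) is exactly the bookkeeping the paper leaves implicit.
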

\begin{proof}
	Let $M_1, \dots, M_d$ be the dimensional matchings in $Q_d$. For $i=1,\dots,k$,
	let $H_i$ be the subgraph of $Q_d$ induced by 
	$M_{3i-2} \cup M_{3i-1} \cup M_{3i}$. By Lemma \ref{lem:DimMathInduce},
	each $H_i$ is a collection of disjoint $3$-dimensional hypercubes.
	
	Now, by Lemma \ref{lem:match}, there is a proper edge coloring
	of $H_i$ using colors $3i-2, 3i-1, 3i$ that avoids the restriction of
	$\varphi$ to $H_i$, for $i=1,\dots,k$. Combining such colorings
	yields a proper $d$-edge coloring of $Q_d$ that avoids $\varphi$.
\end{proof}

If we insist that all precolored edges are contained in a bounded number of
dimensional matchings, then we obtain another family of avoidable partial
(not necessarily proper) $d$-edge colorings of $Q_d$.

\begin{corollary}
\label{cor2}
		If $\varphi$ is a partial $d$-edge coloring of $Q_d$ where all
		colored edges are contained in $\lfloor d/3 \rfloor$ dimensional matchings,
		then $\varphi$ is avoidable.
\end{corollary}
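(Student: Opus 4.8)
The plan is to reduce the statement to the case already handled in Corollary \ref{cor1}, by observing that precolored edges lying in few dimensional matchings can always be absorbed into a matching after a suitable partition of the dimensions. Let $M_1,\dots,M_d$ be the dimensional matchings of $Q_d$, and suppose (after relabeling) that all $\varphi$-colored edges are contained in the $\lfloor d/3\rfloor$ matchings $M_1,\dots,M_{\lfloor d/3\rfloor}$. Write $d=3k+r$ with $r\in\{0,1,2\}$, so that $\lfloor d/3\rfloor=k$. The key point is that we have at least $k$ matchings carrying precolored edges, but we have $d\geq 3k$ matchings in total, hence at least $2k$ matchings that are entirely uncolored; this surplus is what lets us group the dimensions so that each group of three contains at most one precolored matching.

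First I would partition $\{M_1,\dots,M_d\}$ into blocks of three so that each precolored matching lies in its own block, accompanied by two uncolored matchings. Concretely, form $k$ blocks $B_1,\dots,B_k$ where $B_i$ contains $M_i$ (a potentially precolored matching) together with two of the uncolored matchings $M_{k+1},\dots,M_d$; since there are $2k\le d-k$ uncolored matchings available (using $d\geq 3k$), there are enough to supply two to each block, and any leftover matchings (there are $r$ of them, possibly with a different handling when $r\neq 0$) can be colored by the standard edge coloring on the corresponding induced subcubes. By Lemma \ref{lem:DimMathInduce}, the subgraph $H_i$ induced by the three matchings in $B_i$ is a disjoint union of copies of $Q_3$, and within each such copy the colored edges of $\varphi$ all lie in the single matching $M_i$, hence form a matching.

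Now apply Lemma \ref{lem:match} (via the argument of Corollary \ref{cor1}) to each $H_i$: since the restriction of $\varphi$ to $H_i$ has all its colored edges contained in a single dimensional matching, and therefore in a matching, there is a proper $3$-edge coloring of $H_i$ using the three colors associated to $B_i$ that avoids $\varphi\restriction_{H_i}$. Combining these colorings across all blocks, together with the standard coloring on the $r$ remaining uncolored matchings, yields a proper $d$-edge coloring of $Q_d$ that avoids $\varphi$.

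The step I expect to require the most care is the bookkeeping when $r\neq 0$, that is, when $d$ is not divisible by $3$. In that case the leftover $r$ matchings cannot be grouped into a block of three on their own, but since they carry no precolored edges we may simply assign them their standard colors and verify that this remains consistent with the block colorings; the only thing to check is that the color sets used by the blocks and by the leftover matchings are disjoint and together exhaust $\{1,\dots,d\}$, which follows immediately from the fact that the blocks partition the dimensions. Apart from this indexing, every ingredient is supplied by Lemmas \ref{lem:DimMathInduce} and \ref{lem:match}, so no genuinely new obstacle arises.
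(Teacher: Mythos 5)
Your proposal is correct and takes essentially the same approach as the paper: group each of the $\lfloor d/3\rfloor$ precolored dimensional matchings with two uncolored ones, apply Lemma \ref{lem:match} to the resulting unions of $Q_3$'s (where the colored edges lie in a single dimensional matching and hence form a matching), and handle the $r$ leftover uncolored dimensions trivially.
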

\begin{proof}
	Suppose that $M_1,\dots, M_a$ are the dimensional matchings
	that contain edges that are colored under $\varphi$, where
	$a=\lfloor d/3 \rfloor$.
	As in the proof of the preceding corollary, we decompose $Q_d$
	into $a=\lfloor d/3 \rfloor$ subgraphs $H_1,\dots, H_a$ consisting of
	$3$-dimensional hypercubes, and possibly one subgraph $H_{a+1}$ that consists
	of disjoint copies of $1$- or $2$-dimensional hypercubes. Moreover,
	without loss of generality we assume that  $M_i$ is contained in $H_i$,
	$i=1,\dots,d$. The result now follows from Lemma \ref{lem:match}
	as in the proof of Corollary \ref{cor1}.
\end{proof}

If we require that the partial coloring is proper, 
then we can allow up to $2\lfloor d/3 \rfloor$
dimensional matchings in $Q_d$ containing colored edges, while still
being able to avoid the partial coloring.

\begin{corollary}
\label{cor3}
		If $\varphi$ is a  partial proper $d$-edge coloring of $Q_d$ where all
		colored edges are contained in $2\lfloor d/3 \rfloor$ dimensional matchings,
		then $\varphi$ is avoidable.
\end{corollary}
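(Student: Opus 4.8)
The plan is to mimic the block-decomposition strategy used in the proofs of Corollaries \ref{cor1} and \ref{cor2}, but to allow \emph{two} occupied dimensional matchings per block instead of one, so that the stronger Lemma \ref{lem:2Match} (rather than Lemma \ref{lem:match}) is what gets applied inside each $Q_3$. Let $M_1,\dots,M_d$ be the dimensional matchings of $Q_d$, and call $M_i$ \emph{occupied} if it contains a $\varphi$-colored edge. By hypothesis the number $t$ of occupied matchings satisfies $t\le 2\lfloor d/3\rfloor$, so there are $d-t$ unoccupied matchings.

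The first step is purely combinatorial: partition $\{M_1,\dots,M_d\}$ into $\lfloor d/3\rfloor$ blocks of size $3$ together with one leftover block of size $d-3\lfloor d/3\rfloor\in\{0,1,2\}$, arranged so that each size-$3$ block contains \emph{at most two} occupied matchings and the leftover block contains only unoccupied matchings. This is always possible: since $t\le 2\lfloor d/3\rfloor$ we have $\lceil t/2\rceil\le\lfloor d/3\rfloor$, so the occupied matchings can be placed two-per-block among the $\lfloor d/3\rfloor$ size-$3$ blocks; and a short count of the unoccupied matchings in the three residue cases $d\equiv 0,1,2\pmod 3$ (where $d-t\ge d-2\lfloor d/3\rfloor$ equals $\lfloor d/3\rfloor$, $\lfloor d/3\rfloor+1$, $\lfloor d/3\rfloor+2$ respectively) shows there are always enough unoccupied matchings both to fill every size-$3$ block up to three matchings and to supply the leftover block. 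I expect the bookkeeping of these residue cases to be the one genuinely fiddly point, though it is elementary.

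Next, fix a size-$3$ block, say on matchings assigned the palette $\{3i-2,3i-1,3i\}$; by Lemma \ref{lem:DimMathInduce} the subgraph it induces is a disjoint union of copies of $Q_3$. Here a small subtlety must be addressed, exactly as is implicit in Corollaries \ref{cor1}--\ref{cor2}: since I will color this block using only its three palette colors, any $\varphi$-colored edge of the block whose $\varphi$-color lies outside the palette is automatically avoided. Thus I discard those edges and keep only $\varphi_i$, the restriction of $\varphi$ to edges colored with palette colors. This $\varphi_i$ is a partial proper $3$-edge coloring of each $Q_3$ component, and all its colored edges lie in the (at most two) occupied matchings of the block, hence in at most two dimensional matchings of each $Q_3$. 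Lemma \ref{lem:2Match} then produces, on each $Q_3$ component, a proper coloring with the three palette colors that avoids $\varphi_i$; by the discarding observation this coloring avoids $\varphi$ on every edge of the block.

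Finally, the leftover block (a union of $Q_1$'s or $Q_2$'s built from unoccupied matchings) carries no $\varphi$-colored edges, so I color it properly and arbitrarily with its one or two remaining colors. Since distinct blocks use disjoint palettes and the block subgraphs edge-partition $Q_d$, concatenating all these colorings yields a proper $d$-edge coloring of $Q_d$ that disagrees with $\varphi$ on every $\varphi$-colored edge, i.e.\ avoids $\varphi$. The substance of the argument rests entirely on Lemma \ref{lem:2Match}; the only work beyond invoking it is the partition in the second paragraph, which is where I expect the main (if mild) obstacle to lie.
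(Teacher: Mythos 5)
Your proof is correct and follows essentially the same route as the paper, which obtains Corollary \ref{cor3} by repeating the block decomposition of Corollaries \ref{cor1}--\ref{cor2} (three dimensional matchings per block, at most two of them occupied, leftover block unoccupied) and substituting Lemma \ref{lem:2Match} for Lemma \ref{lem:match}. Your explicit residue-case count and the observation about discarding edges whose $\varphi$-color lies outside the block's palette are details the paper leaves implicit, but they introduce no deviation from its argument.
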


The only difference in the proof of Corollary \ref{cor3} 
compared to the proof of Corollary \ref{cor2} is that we use Lemma \ref{lem:2Match}
in place of Lemma \ref{lem:match}; we omit the details.

A weaker and perhaps more tractable version of Problem
\ref{prob:proper} is obtained by requiring that every color class in the 
partial edge coloring
to be avoided is an induced matching.

\begin{conjecture}
\label{conj:induced}
	If $d \geq 3$ and $\varphi$ is a partial $d$-edge coloring of 
	$Q_d$ where every color class
	is an induced matching, then $\varphi$ is avoidable.
\end{conjecture}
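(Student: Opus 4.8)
The plan is to reduce Conjecture \ref{conj:induced} for general $d$ to finitely many base cases, by decomposing $Q_d$ along its dimensional matchings exactly as in the proof of Corollary \ref{cor1}, but now allowing blocks slightly larger than $Q_3$. The key elementary observation is that every integer $d \geq 3$ is a sum of parts from $\{3,4,5\}$: if $d \equiv 0 \pmod 3$ use only $3$'s; if $d \equiv 1 \pmod 3$ replace one $3$ by a $4$, so $d = 3(q-1)+4$; and if $d \equiv 2 \pmod 3$ replace one $3$ by a $5$, so $d = 3(q-1)+5$. Accordingly, I would partition the $d$ dimensional matchings of $Q_d$ into groups $G_1,\dots,G_m$ of sizes $s_1,\dots,s_m \in \{3,4,5\}$, and assign to $G_j$ its own block of $s_j$ colors, the blocks being pairwise disjoint and together exhausting $\{1,\dots,d\}$. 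By Lemma \ref{lem:DimMathInduce}, the subgraph induced by the matchings in $G_j$ is a disjoint union of copies of $Q_{s_j}$; moreover each such copy is an \emph{induced} subgraph of $Q_d$, since any two of its vertices differ only in the coordinates indexing the matchings of $G_j$, so all their mutual $Q_d$-edges already lie inside the block.

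First I would color each block separately. Fix a $Q_{s_j}$-component $H$ of the group $G_j$. Because $H$ is an induced subgraph of $Q_d$, the restriction of each color class of $\varphi$ to $H$ is again an induced matching, and this remains true after discarding the edges whose $\varphi$-color lies outside the $s_j$ colors assigned to $G_j$. Thus the effective precoloring to be avoided inside $H$ is a partial $s_j$-edge coloring of $Q_{s_j}$ all of whose color classes are induced matchings. Granting that every such precoloring of $Q_{s_j}$ is avoidable, I can color $H$ with the $s_j$ colors of $G_j$ so as to avoid $\varphi$; an edge of $H$ precolored by a color outside $G_j$'s block is then avoided automatically. Combining the colorings of all blocks yields a proper $d$-edge coloring of $Q_d$, since at each vertex the edges belonging to a fixed group receive distinct colors from that group's block, while colors from different blocks cannot coincide; and this combined coloring avoids $\varphi$, as required.

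It therefore remains to establish the three base cases: a partial $s$-edge coloring of $Q_s$, $s \in \{3,4,5\}$, each of whose color classes is an induced matching, is avoidable. The case $s=3$ is precisely the ingredient already used for the $d=3k$ instance of Conjecture \ref{conj:induced}, and can be read off the classification underlying Proposition \ref{prop:Q3}, since none of the minimal unavoidable partial $3$-edge colorings of $Q_3$ has all of its color classes induced matchings. The cases $s=4$ and $s=5$ are the crux, and I would attack them in two ways. The direct route is an exhaustive computer search in the spirit of Proposition \ref{prop:Q3}: for each fixed $s$ there are only finitely many precolorings to consider, and one checks that each admits an avoiding proper $s$-edge coloring. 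The structural route, which I expect to be the hard part, imitates the block decomposition one level down: view $Q_4$ as two copies of $Q_3$ joined by a dimensional matching $M$, color the two copies with $\{1,2,3\}$ avoiding $\varphi$ by the $s=3$ case, and then attempt to color $M$ with color $4$.

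The subtlety in the structural route is exactly the phenomenon already visible for $Q_2$: an edge of $M$ that is $\varphi$-colored $4$ cannot simply receive color $4$, and repairing it forces a recoloring that propagates into the two $Q_3$-halves. Since the color-$4$ class is an induced matching, the offending edges of $M$ are pairwise far apart, so one expects to clear each conflict by a bounded Kempe-type interchange on a bicolored path or $4$-cycle in one half (using the extra room that $Q_3$, unlike $Q_1$, provides); making these local repairs simultaneously consistent is the main obstacle, and the same difficulty, amplified, reappears for $Q_5$. Indeed, the observation that the analogue of Lemma \ref{lem:match} fails for $Q_2$ — two opposite edges of a $4$-cycle colored $1$ and $2$ form an unavoidable induced-matching precoloring — is exactly why blocks of size $1$ or $2$ must be forbidden, and why the genuinely new content of the general conjecture is concentrated in the small cubes $Q_4$ and $Q_5$.
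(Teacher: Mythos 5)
Your reduction is sound as far as it goes, but it does not prove the statement --- and note that the statement is a conjecture which the paper itself leaves open: the paper only establishes the case $d=3k$ (the corollary following Conjecture \ref{conj:induced}), by precisely the block decomposition you describe, i.e.\ grouping dimensional matchings, invoking Lemma \ref{lem:DimMathInduce}, and reading the $Q_3$ base case off the classification in Proposition \ref{prop:Q3}. Your genuinely new ingredient is to allow blocks of sizes $3$, $4$ and $5$ so that every $d\geq 3$ is covered. The bookkeeping there is correct: each component of a group of dimensional matchings is a vertex-induced subcube, so restrictions of induced matchings remain induced matchings, colors outside a block are avoided for free, and the combined coloring is proper. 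The $s=3$ base case does follow from Proposition \ref{prop:Q3}, since each of the six minimal unavoidable proper colorings in Figure \ref{fig:Q3prop} has a color class with three edges, while an induced matching in $Q_3$ has at most two edges (deleting two vertices from $Q_3$ destroys at most six of its twelve edges, so any six vertices induce at least six edges, never just three).

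The gap is the base cases $s=4$ and $s=5$: these are exactly Conjecture \ref{conj:induced} for $d=4$ and $d=5$, and you do not prove them. The computer search you mention is proposed, not performed, so it establishes nothing; and the structural route you sketch for $Q_4$ (color the two $Q_3$-halves with $\{1,2,3\}$ avoiding $\varphi$, give the cross matching $M$ color $4$, then repair the edges of $M$ that are $\varphi$-colored $4$ by Kempe-type interchanges) stalls exactly where you say it does: a repair inside one half can itself collide with the precoloring of that half, or change the palette at the endpoint of another conflicted edge of $M$, and nothing in your sketch controls this propagation --- the induced-matching hypothesis only separates the conflicts along $M$, it does not make the repairs independent. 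What you have, then, is a correct proof that the conjecture for all $d\geq 3$ is equivalent to its two smallest open instances $d=4$ and $d=5$: a worthwhile reduction, and a genuine strengthening of the paper's $d=3k$ result in the sense that any future verification of the $Q_4$ and $Q_5$ cases (for instance by an exhaustive search in the spirit of Proposition \ref{prop:Q3}) would immediately yield the full conjecture. But as it stands it is a conditional argument, not a proof of the statement.
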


Using Proposition \ref{prop:Q3} and proceeding as in the proofs of the preceding
Corollaries, we can prove the following stronger version of Conjecture
\ref{conj:induced} in the case when $d$ is divisible by $3$.

\begin{corollary}
	If $d=3k$ and $\varphi$ is a partial proper $d$-edge coloring
	of $Q_d$, where every precolored edge is at distance $1$ from at most one other
	edge with the same color, then $\varphi$ is avoidable.
\end{corollary}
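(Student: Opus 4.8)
The plan is to reduce to the three-dimensional case via the decomposition used in Corollaries~\ref{cor1}--\ref{cor3}, extracting from Proposition~\ref{prop:Q3} a local avoidability criterion tailored to the distance hypothesis. First I would establish the claim that \emph{every partial proper $3$-edge coloring of $Q_3$ in which each colored edge is at distance $1$ from at most one other edge of the same color is avoidable}. To prove it, I would inspect the six minimal unavoidable configurations of Figure~\ref{fig:Q3prop} and check that each contains a color class of three edges in which one ``central'' edge lies at distance $1$ (within $Q_3$) from each of the other two; in every one of the six pictures the color $1$ is used on exactly such a triple. This property is invariant under permuting colors and applying cube automorphisms, and it is preserved under adding further colored edges. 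Hence any partial proper $3$-edge coloring that contains one of the six minimal configurations also possesses an edge at distance $1$ from two same-colored edges. Since by Proposition~\ref{prop:Q3} an unavoidable partial proper $3$-edge coloring must contain one of the six, a coloring satisfying the distance hypothesis is avoidable.

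Next I would decompose $Q_d$ as in the proof of Corollary~\ref{cor1}: writing $d=3k$ and letting $M_1,\dots,M_d$ be the dimensional matchings, set $H_i=Q_d[M_{3i-2}\cup M_{3i-1}\cup M_{3i}]$ for $i=1,\dots,k$, so that by Lemma~\ref{lem:DimMathInduce} each $H_i$ is a disjoint union of copies of $Q_3$. On a component $K\cong Q_3$ of $H_i$ the precoloring to be avoided consists of the edges $e$ with $\varphi(e)\in\{3i-2,3i-1,3i\}$, relabeled to $\{1,2,3\}$; since $\varphi$ is proper this is a partial proper $3$-edge coloring of $K$. I would recolor $K$ with the three colors $\{3i-2,3i-1,3i\}$ so as to disagree with $\varphi$ on each of these edges; every edge of $K$ with $\varphi(e)\notin\{3i-2,3i-1,3i\}$, or left uncolored, is then avoided automatically.

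The crux is to verify that the distance hypothesis is inherited by each component $K$, so that the claim applies. Suppose some edge $e$ of $K$ were at distance $1$ in $K$ from two same-colored edges $e',e''$ of $K$. Since $K$ is a subgraph of $Q_d$, every path in $K$ is a path in $Q_d$, so the distance in $Q_d$ between $e$ and each of $e',e''$ is at most $1$; properness of $\varphi$ forces same-colored edges to be non-adjacent, ruling out distance $0$, so $e$ is at distance exactly $1$ in $Q_d$ from both $e'$ and $e''$, contradicting the hypothesis on $\varphi$. Thus each component satisfies the hypothesis of the claim and is avoidable using its three colors. Assembling the colorings of all components of all the $H_i$ produces a proper $d$-edge coloring of $Q_d$, proper because distinct triples use disjoint color sets while each vertex meets exactly one edge of every $M_j$, and this coloring avoids $\varphi$ by construction.

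I expect the principal obstacle to be the finite but slightly delicate verification of the claim for all six configurations in Figure~\ref{fig:Q3prop}, together with the inheritance step: one must use that distances can only decrease on passing from the subgraph $K$ to $Q_d$ and that properness forbids distance $0$, in order to turn a single distance-$1$ coincidence inside one copy of $Q_3$ into a violation of the global hypothesis. The remaining bookkeeping follows the decomposition template already established in Corollaries~\ref{cor1}--\ref{cor3}.
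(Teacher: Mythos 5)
Your argument is correct and takes essentially the same route as the paper, which omits the proof and says only that one uses Proposition~\ref{prop:Q3} and proceeds as in the preceding corollaries: decompose $Q_d$ into $k$ edge-disjoint unions of $3$-cubes via consecutive triples of dimensional matchings, and note that each of the six minimal unavoidable configurations of Figure~\ref{fig:Q3prop} has a color class of three pairwise distance-$1$ edges, so no component can contain one under the distance hypothesis. Your explicit treatment of the inheritance step (distances only decrease when passing from a component to $Q_d$, and distance $0$ is ruled out by properness) fills in a detail the paper leaves implicit.
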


Finally, we shall prove that Conjecture
\ref{conj:general} is true in the case when the partial edge coloring is proper.
We shall need the following easy lemma.

\begin{lemma}
\label{lem:Q2}
	If $\varphi$ is a partial proper $2$-edge coloring of $Q_2$, then $\varphi$
	is avoidable unless two non-adjacent edges are colored by different colors.
\end{lemma}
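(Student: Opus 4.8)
The plan is to exploit that $Q_2$ is a single $4$-cycle, and hence admits exactly two proper $2$-edge colorings, which are moreover \emph{complementary}: one is obtained from the other by interchanging the colors $1$ and $2$, so they disagree on every edge. Write $f_1$ and $f_2$ for these two colorings. Since every proper $2$-edge coloring of $Q_2$ equals $f_1$ or $f_2$, a proper coloring avoids $\varphi$ if and only if it disagrees with $\varphi$ on every $\varphi$-colored edge; thus $\varphi$ is unavoidable precisely when each of $f_1$ and $f_2$ agrees with $\varphi$ on at least one colored edge.

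First I would set up a small bookkeeping device. Because $f_1$ and $f_2$ differ on every edge, for each $\varphi$-colored edge $e$ exactly one of $f_1(e)$ and $f_2(e)$ equals $\varphi(e)$; accordingly call $e$ of \emph{type $1$} or \emph{type $2$}. In this language the previous observation reads: $\varphi$ is avoidable if and only if all colored edges share a single type (in which case the other coloring avoids $\varphi$), and unavoidable if and only if both types occur among the colored edges.

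The substance of the argument is then to translate the type of a colored edge into the terms of the statement, i.e.\ into colors and adjacency. Here I would use that each $f_i$ is constant on each of the two dimensional matchings of $Q_2$, that is, on each pair of opposite (non-adjacent) edges. Consequently two non-adjacent colored edges, lying in a common dimensional matching, receive the same color under each $f_i$, so they have the same type if and only if they carry the same color under $\varphi$. For two adjacent colored edges the situation is dual: properness of $\varphi$ forces them to receive the two distinct colors, and as $f_1$ (likewise $f_2$) also assigns them the two distinct colors, the two edges are necessarily of the same type. Combining these two facts, both types occur among the colored edges if and only if some pair of non-adjacent colored edges is colored differently; equivalently, $\varphi$ is avoidable exactly when no two non-adjacent edges are colored differently, which is the assertion of the lemma.

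I do not anticipate a genuine obstacle, as everything reduces to a finite check on a four-edge graph. The only point demanding a little care is the treatment of adjacent colored edges, where one must invoke the properness of $\varphi$ to see that adjacency forces a common type, and hence can never, on its own, create the unavoidable configuration; this is exactly what singles out the \emph{non-adjacent} pairs in the exceptional case.
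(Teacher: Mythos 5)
Your argument is correct and complete: $Q_2$ is a $4$-cycle with exactly two proper $2$-edge colorings, each constant on the two dimensional matchings and complementary to the other, and your "type" bookkeeping correctly reduces unavoidability to the existence of two non-adjacent colored edges with different colors (with properness used exactly where needed, to rule out adjacent pairs as a source of unavoidability). The paper states this lemma without proof as an easy observation, and your reasoning is the natural verification it has in mind.
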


\begin{theorem}
\label{th:prop}
	If $\varphi$ is a partial proper $d$-edge coloring of $Q_d$ where
	every color appears on at most $d-2$ edges, then $\varphi$ is
	avoidable.
\end{theorem}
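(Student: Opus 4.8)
The plan is to adapt the Hall-type recolouring strategy used in the proof of Lemma~\ref{lem:onecolor}, now exploiting the properness of $\varphi$ to control the obstructions. First I would start from the standard edge colouring $f$ of $Q_d$, whose colour classes are the $d$ dimensional matchings $M_1,\dots,M_d$, and form the bipartite graph $B(f)$ with the matchings on one side and the colours $\{1,\dots,d\}$ on the other, joining $M_i$ to $j$ precisely when no edge of $M_i$ is $\varphi$-coloured $j$. A perfect matching in $B(f)$ gives a permutation of the colours that turns $f$ into a proper $d$-edge colouring avoiding $\varphi$, so it suffices to reach, after finitely many interchanges on $2$-coloured $4$-cycles, a colouring $f'$ whose associated bipartite graph satisfies Hall's condition. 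The small cases are handled separately: $d\le 3$ by Proposition~\ref{prop:Q3} together with Lemma~\ref{lem:Q2}, so I would assume $d$ large in the main argument and, where convenient, feed low-dimensional remainders into the decomposition results of Corollary~\ref{cor3}.

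The key point is that properness pins down the structure of any anti-Hall set. For each colour $j$ let $U_j\subseteq\{1,\dots,d\}$ be the set of coordinate directions in which $\varphi^{-1}(j)$ has an edge; then $M_i$ forbids $j$ in $B(f)$ exactly when $i\in U_j$. If $S$ is a set of matchings violating Hall's condition and $B=\{j: S\subseteq U_j\}$ is the set of colours forbidden by every matching of $S$, then $|N(S)|=d-|B|<|S|$, so that $|B|+|S|>d$. Since each $j\in B$ uses every direction of $S$ it has at least $|S|$ edges; as every colour appears on at most $d-2$ edges this forces $|S|\le d-2$, and hence $|B|\ge 3$. Thus an anti-Hall set is necessarily \emph{thin} ($|S|\le d-2$) while its forbidden palette is \emph{thick} ($|B|\ge d-|S|+1$), and every colour of $B$ meets all of $S$ with very little room to spare.

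The engine is then the interchange step. I would aim to show that, as long as some anti-Hall set persists, there is a $2$-coloured $4$-cycle $C$ of the current colouring $f_i$ whose interchange strictly decreases a deficiency potential (for instance $\sum_S(|S|-|N(S)|)$ summed over maximal violated sets, together with a secondary count of forbidden incidences) without creating a new violation. Concretely, pick a colour $j\in B$ and a direction $i\in S$ such that $M_i$ forbids $j$; by Lemma~\ref{lem:4cycle} the relevant edge lies in $d-1$ bicoloured $4$-cycles, one for each other direction $b$, and the $d-2$ bound on colour classes leaves many choices of $b\notin U_j$ whose matching is ``light'' in the sense of Theorem~\ref{th:gen}. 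Interchanging colours $i$ and $b$ on such a cycle moves the offending edge out of $M_i$, so that $M_i$ no longer forbids $j$ and $j$ enters $N(S)$, relieving $S$; the sparsity afforded by $d-2 < d$ is exactly what guarantees a destination matching $b$ that is not itself binding, mirroring the heavy/light bookkeeping and the ``unused edge'' accounting of Theorem~\ref{th:gen}.

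The main obstacle, which is where the bulk of the work will go, is the \emph{simultaneous} resolution of all violations: a single interchange that relieves one anti-Hall set $S$ can tighten another set $S'$ containing the direction $b$ into which the edge was moved, and it can also move a second $\varphi$-coloured edge (the $4$-cycle carries two edges of each colour of $f$). I would control this by (a) restricting to $4$-cycles all of whose edges are previously unused, so that progress is monotone and edges are never shuffled back, and (b) using the thin/thick dichotomy above to argue that the colours of $B$ are so spread across directions that the destination direction $b$ can always be chosen to avoid the thick palettes of the remaining violated sets; the bound $|B|+|S|>d$ with $|S|\le d-2$ is precisely the quantitative input that makes enough ``safe'' directions available. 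Verifying that a single monotone potential can be made to decrease under these constraints, while enough fresh $4$-cycles remain (again via Lemma~\ref{lem:4cycle} and the total edge budget $d(d-2)$), is the crux of the argument and the step I expect to require the most care.
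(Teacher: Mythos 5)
Your Hall-theoretic setup is sound --- the bipartite graph $B(f)$, the identification $|N(S)| = d - |B|$, and the conclusion that any anti-Hall set satisfies $|S| \le d-2$ and $|B| \ge d-|S|+1 \ge 3$ are all correct --- but the proof has a genuine gap exactly where you flag it: the interchange ``engine'' is asserted, never constructed, and there are concrete reasons it cannot be pushed through as described. First, a single interchange does not make $M_i$ stop forbidding $j$: a dimensional matching may contain up to $d-2$ edges $\varphi$-colored $j$, all of which must be moved out (while none are moved in), and each interchange also drags a second edge of the heavy class and two edges of the light class along with it. Second, after the first interchange the color classes of the current coloring are no longer dimensional matchings, so Lemma \ref{lem:4cycle} and your $U_j$-analysis of anti-Hall sets no longer apply; this is precisely why Theorem \ref{th:gen} needs its elaborate ``unused edge'' accounting. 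Third, and decisively, that accounting does not survive at this density: the counting behind inequality \eqref{majoreq} requires the total number of precolored edges to be about $d^2/8$, whereas here it can be $d(d-2)$, so the number of $4$-cycles ruled out by other precolored or previously used edges swamps the candidates. Worse, no ``light class'' threshold can certify Hall's condition here: as soon as three $\varphi$-colors each place one edge in each of the same $d-2$ directions (perfectly consistent with your hypotheses), an anti-Hall set exists, so the invariant you would actually have to maintain is that \emph{for every set $B$ of $m \ge 3$ colors, at most $d-m$ classes meet all colors of $B$} --- a condition over all color triples simultaneously, which your potential-function sketch does not begin to control.

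There is also a structural warning sign: your argument never substantively uses properness of $\varphi$ (the bound $|S| \le d-2$ uses only that each color class of $\varphi$ meets at most $d-2$ matchings). A completed version of your plan would therefore prove Conjecture \ref{conj:general} in full generality, which the paper leaves open. The paper's proof is entirely different and uses properness essentially: it decomposes $Q_d$ into subgraphs induced by pairs of dimensional matchings (plus one triple when $d$ is odd), i.e.\ disjoint unions of copies of $Q_2$ and $Q_3$, and then counts ordered partitions of $\{1,\dots,d\}$ into $2$-sets (and one $3$-set), assigning each part to one subgraph. Lemma \ref{lem:Q2}, Proposition \ref{prop:Q3} and Claim \ref{cl:Q3} bound how many partitions any copy of $Q_2$ or $Q_3$ can forbid --- here properness enters, since every minimal unavoidable proper coloring of $Q_3$ needs seven edges with color multiplicities $3,2,2$ --- and a comparison of the counts shows a non-forbidden partition always exists. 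You would need either to find where properness enters your interchange scheme and supply the missing combinatorial control, or to abandon the Hall/interchange route for a decomposition argument of this kind.
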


\begin{proof}
If $d \leq 3$, then the theorem trivially holds by Proposition \ref{prop:Q3}.
If $d=4$, then consider a dimensional matching $M$ not containing edges of all colors
under $\varphi$; such a matching exists, since every color appears on at most $d-2$ edges
under $\varphi$. Suppose, for example, that color $4$ does not appear on an edge of $M$.

The graph $Q_d-M$ consists of two disjoint copies $H_1$ and $H_2$ of $Q_3$. 
Moreover,
since every color appears on at most two edges, it follows from
Proposition \ref{prop:Q3} that there are proper $3$-edge colorings (using colors $1,2,3$)
of $H_1$ and $H_2$ that avoid the restrictions of $\varphi$ to $H_1$ and $H_2$, respectively.
By coloring all edges of $M$ by color $4$, we obtain a proper 
$4$-edge coloring of $Q_d$ that avoids $\varphi$.

Next, we consider the case when $d \geq 5$.
We shall consider two main cases, namely when $d=2k$, and when
$d=2k+1$.

Let us first consider the case when $d=2k \geq 6$. Denote by
$M_1,\dots, M_{2k}$ the dimensional matchings of $Q_d$ and consider the subgraphs
$H_1, \dots, H_k$ of $Q_d$, where $H_i$ is the subgraph induced by 
$M_{2i-1} \cup M_{2i}$. We shall partition the colors in $\{1,\dots, 2k\}$
into $2$-subsets $A_1, \dots, A_k$ and use the colors in $A_i$ for a proper
edge coloring of $H_i$ that avoids the restriction of $\varphi$ to $H_i$.
Combining all these colorings yields a $d$-edge coloring of $Q_d$ which avoids $\varphi$.

In total, there are $\frac{(2k)!}{(2!)^k}$ ordered
partitions of $\{1,\dots, 2k\}$
into $k$ $2$-subsets.
Now, some of these partitions $A_1 \cup \dots \cup A_k$ are {\em forbidden} 
in the sense that for some $i$ there is no proper edge coloring
of $H_i$ using colors from $A_i$ that avoids the restriction of $\varphi$ to $H_i$.
If a copy of $Q_2$, contained in some subgraph $H_i$, contains at most two
$\varphi$-colored edges, then by Lemma \ref{lem:Q2} at most 
$\frac{(2k-2)!}{(2!)^{k-1}}$
partitions of $\{1,\dots, 2k\}$ are forbidden due to this coloring of $Q_2$;
similarly, if at most four edges of $Q_2$ are colored under $\varphi$, then
at most $2 \frac{(2k-2)!}{(2!)^{k-1}}$ partitions are forbidden.

Now, since $Q_d$ contains at most $d(d-2)$ $\varphi$-colored edges,
at most $\frac{d(d-2)}{2} \frac{(2k-2)!}{(2!)^{k-1}}$ partitions of
$\{1,\dots, 2k\}$ are forbidden due to the condition that the resulting $d$-edge coloring
of $Q_d$ should avoid $\varphi$. Thus, if
$$\frac{(2k)!}{(2!)^k} - \frac{d(d-2)}{2}\frac{(2k-2)!}{(2!)^{k-1}}>0,$$
then there is a non-forbidden partition of $\{1,\dots, 2k\}$. Since this
inequality holds for any $k \geq 1$, the desired result follows.

Let us now consider the case when $d=2k+1$. 
The argument here is similar to the one given above. We partition
$Q_d$ into the subgraphs $H_1, \dots, H_k$, where $H_i$
is induced by the dimensional matchings $M_{2i-1} \cup M_{2i}$, $i=1,\dots, k-1$,
and $H_k$ is induced by $M_{2k-1}, M_{2k}, M_{2k+1}$. We now seek a partition
of $\{1,\dots, 2k+1\}$ into sets $A_1 \cup \dots \cup A_{k}$, 
where $|A_i|=2$, $i=1,\dots k-1$,
and $|A_{k}| =3$, and corresponding proper edge colorings of $H_1,\dots, H_k$, where
a coloring of $H_i$ uses colors from $A_i$. 

In total, there are $\frac{(2k+1)!}{(2!)^{k-1}3!}$ such ordered partitions of
$\{1,\dots, 2k+1\}$.
As before, some of these partitions are forbidden due to the fact
the resulting edge coloring should avoid $\varphi$.
We shall need the following claim.

\begin{claim}
\label{cl:Q3}
		Let $\varphi$ be a partial edge coloring of a copy $H$ of the $3$-dimensional
		hypercube $Q_3$ contained in $H_k$. Let $s(a)$ be the largest 
		number of partitions
		of $\{1,\dots, 2k+1\}$ that are forbidden due to the restriction of
		$\varphi$ to $H$ being unavoidable when $a$ 
		edges of $H$ are colored. Then
		$$
    s(a) \leq 
		\begin{cases}
	      0, & \text{if } a \leq 6,\\
        \frac{(2k-2)!}{(2!)^{k-1}}, & \mbox{if } 7 \leq a \leq 8,\\
        3 \frac{(2k-2)!}{(2!)^{k-1}}, & \mbox{if } a =9, \\
				4 \frac{(2k-2)!}{(2!)^{k-1}}, & \mbox{if } a =10, \\
				6 \frac{(2k-2)!}{(2!)^{k-1}}, & \mbox{if } a =11, \\
				9 \frac{(2k-2)!}{(2!)^{k-1}}, & \mbox{if } a =12. 
			\end{cases}
		$$		
\end{claim}
\begin{proof}
	By Proposition \ref{prop:Q3},
	the partial edge colorings of $Q_3$ in Figure \ref{fig:Q3prop}
	is a complete list of minimal unavoidable partial proper $3$-edge colorings
	of $Q_3$.
	Note that every such partial coloring contains three edges colored $1$,
	two edges colored $2$, and two edges colored $3$. Thus, if $H$
	contains at most six $\varphi$-colored edges, then no partitions
	of $\{1,\dots, 2k+1\}$ are forbidden due to the restriction of $\varphi$
	to $H$ being unavoidable; that is, $s(a)=0$ if $a \leq 6$.
	Similarly, if at most $8$ different $\varphi$-colored edges appear in $H$,
	then at most $\frac{(2k-2)!}{(2!)^{k-1}}$ are forbidden.
	
	If $H$ contains $9$ $\varphi$-colored edges, then at most 
	$3 \frac{(3k-3)!}{(3!)^{k-1}}$ partitions are forbidden, since there could
	be four colors present on edges in $H$, one of which appears on three edges.
	Similarly, it is straightforward that $s(10) \leq 4\frac{(2k-2)!}{(2!)^{k-1}}$, 
	$s(11) \leq 6\frac{(2k-2)!}{(2!)^{k-1}}$, and
	$s(12) \leq 9\frac{(2k-2)!}{(2!)^{k-1}}$.
\end{proof}

Let $b$ be the number of $\varphi$-colored edges that appear
on edges in $H_k$. Then by using the same counting arguments as above
and invoking Claim \ref{cl:Q3}, we deduce that at most 
$$\left(d(d-2)-b\right) \frac{1}{2}\frac{(2k-1)!}{(2!)^{k-2}3!}
+ b \frac{9}{12} \frac{(2k-2)!}{(2!)^{k-1}}$$
partitions of $\{1,\dots, 2k\}$ are forbidden due to the condition that the 
resulting $d$-edge coloring
of $Q_d$ should avoid $\varphi$.
Thus, if
$$\frac{(2k+1)!}{(2!)^{k-1}3!} - 
\left(\left(\frac{d(d-2)}{2}-\frac{b}{2}\right) \frac{(2k-1)!}{(2!)^{k-2}3!}
+ b \frac{9}{12} \frac{(2k-2)!}{(2!)^{k-1}}\right)>0,$$
then there is a non-forbidden partition of $\{1,\dots, 2k+1\}$.
This holds if $k \geq 3$; and if $k=2$, then we can select the
two dimensional matchings contained in $H_1$ to be maximal with respect
to the property of containing $\varphi$-precolored edges. This implies
that $H_k$
contains at most nine $\varphi$-colored edges; that is, $b \leq 9$,
and the required inequality holds.
\end{proof}

\section{Extending and avoiding edge colorings simultaneously}

In \cite{CasselgrenMarkstromPham}, it was proved that any partial
proper coloring of at most $d-1$ edges of $Q_d$ is extendable to a
proper $d$-edge coloring of $Q_d$. Moreover,
it was proved that any partial proper coloring of at most $d$ edges
in $Q_d$ is extendable unless it satisfies one of the following conditions:

\begin{itemize}
	 
		 \item[(C1)]  there is an uncolored edge $uv$ in $Q_d$ such that
		$u$ is incident with edges of $r \leq d$ distinct colors and
		$v$ is incident to $d-r$ edges colored with $d-r$
		other distinct colors (so $uv$
		is adjacent to edges of $d$ distinct colors);
		
		\item[(C2)]  there is a vertex $u$ 
		and a color $c$ such that $u$ is incident with at least one colored
		edge, $u$ is not incident with any edge of color $c$, and every
		uncolored edge incident with $u$ is adjacent to another edge colored $c$;

		\item[(C3)] there is a vertex $u$ and a color $c$
		such that every edge
		incident with $u$ is uncolored
		and every edge incident with $u$ is adjacent to another edge colored $c$;

		\item[(C4)]
		$d=3$ and the three precolored edges use three different colors and
		form a subset of a dimensional matching.

	\end{itemize}
		For $i=1,2,3,4$, we denote by $\mathcal{C}_i$
	the set of all colorings of $Q_d$, $d \geq 1$,
	satisfying the corresponding condition above, and we set
	$\mathcal{C} = \cup \mathcal{C}_i$.
	
	\begin{theorem}
	\label{extQ_d}
	\cite{CasselgrenMarkstromPham}
	If $\varphi$ is a partial proper $d$-edge coloring of at most
	$d$ edges in $Q_d$, then $\varphi$ is extendable
	to a proper $d$-edge coloring of $Q_d$ unless  $\varphi \in \mathcal{C}$.
	\end{theorem}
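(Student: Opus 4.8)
The plan is to establish the two directions of the characterization separately. For the easy direction I would check that every coloring in $\mathcal{C}$ is genuinely non-extendable. Conditions (C1)--(C3) are pure counting at a single edge or vertex: in any proper $d$-edge coloring of $Q_d$ each vertex, having degree $d$, is incident to edges of all $d$ colors, and each edge must avoid the colors on its incident edges. Under (C1) the uncolored edge $uv$ is already adjacent to edges of all $d$ colors and hence has no admissible color; under (C2) and (C3) the designated color $c$ can never be placed on any edge at $u$, so $u$ cannot see all $d$ colors. The exceptional case (C4) is confined to $Q_3$ and is settled by the finite analysis behind Proposition~\ref{prop:Q3} (equivalently, it is one of the non-extendable configurations recorded in \cite{CasselgrenMarkstromPham}).

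For the substantive direction---that $\varphi \notin \mathcal{C}$ with at most $d$ colored edges forces extendability---I would induct on $d$. The base cases $d \le 3$ are finite and follow from the explicit description of $Q_3$, with (C4) accounting for the only genuinely new obstruction appearing there. For the inductive step ($d \ge 4$) I would use the decomposition $Q_d = H_1 \cup H_2 \cup M$ furnished by Lemma~\ref{lem:matchings} and Lemma~\ref{lem:DimMathInduce}, where $M = M_\ell$ is a dimensional matching and $H_1 \cong H_2 \cong Q_{d-1}$. The governing principle is that any proper $d$-edge colorings of $H_1$ and $H_2$ (each using the full palette $\{1,\dots,d\}$, so that every vertex of the $(d-1)$-regular graph $H_j$ misses exactly one color) glue to a proper coloring of $Q_d$ precisely when the two ``missing-color functions'' agree at the two endpoints of every edge of $M$, the common value then being the color assigned to that matching edge.

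Accordingly, I would first choose the splitting direction $\ell$ with care: since $\varphi$ colors at most $d$ edges spread over the $d$ dimensional matchings, I can try to pick $M$ so that it carries a colored edge (forcing each half to inherit at most $d-1$ colored edges, enough for the induction hypothesis) and so that neither restriction $\varphi|_{H_1}$ nor $\varphi|_{H_2}$ lands in the exceptional family $\mathcal{C}$ for $Q_{d-1}$. I would then extend $\varphi|_{H_1}$ inside $H_1$ by the induction hypothesis, read off its missing-color function along $M$, and seek a compatible extension of $\varphi|_{H_2}$; any remaining mismatches between the two boundary functions can be repaired by interchanges on bicolored $4$-cycles and, more generally, along maximal $(a,b)$-colored Kempe paths in $H_2$, since such a swap toggles the missing colors exactly at the two path endpoints (Lemma~\ref{lem:DimMathInduce}, Lemma~\ref{lem:4cycle}).

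The hard part will be this gluing step. Prescribing the missing color at a vertex is a per-vertex constraint rather than a single-edge precoloring, so the induction hypothesis does not apply to $H_2$ verbatim; one must instead combine it with the Kempe-swap reconciliation and argue that the bounded number of boundary disagreements (controlled by the sparsity) can all be eliminated without disturbing precolored edges and without creating any of the local obstructions (C1)--(C4) inside $H_2$. Verifying that these four configurations are the only ones that can survive---that is, that a careful choice of splitting direction together with Kempe corrections always succeeds otherwise---is the most delicate bookkeeping, and I expect the degenerate cases, where every splitting direction forces all colored edges into a single half, to be exactly those that feed into the exceptional family $\mathcal{C}$.
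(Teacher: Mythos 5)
First, note that this paper does not actually prove Theorem~\ref{extQ_d}: it is quoted from \cite{CasselgrenMarkstromPham}, so there is no in-paper proof to compare your attempt against. Judged on its own terms, your easy direction is essentially fine ((C1)--(C3) are the counting arguments you describe; though your appeal to Proposition~\ref{prop:Q3} for (C4) is a category error, since that proposition classifies \emph{unavoidable} colorings, not non-extendable ones --- (C4) needs its own finite check on $Q_3$), but the substantive direction has a genuine gap that you yourself flag and do not close.

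The central problem is that your inductive step invokes a statement strictly stronger than the induction hypothesis. The hypothesis for $d-1$ concerns proper $(d-1)$-edge colorings of $Q_{d-1}$ with $d-1$ colors, whereas your plan requires proper colorings of $H_1\cong H_2\cong Q_{d-1}$ with the full palette $\{1,\dots,d\}$ \emph{and} with the missing color prescribed at every vertex of $H_2$ met by $M$ (all $2^{d-1}$ of them, not a bounded number). That is a per-vertex list/defect constraint, not an edge precoloring, so the induction gives you nothing about $H_2$ directly; moreover the precolored edges inside $H_1$ may already use all $d$ colors, so you cannot even reduce $H_1$ to a $(d-1)$-coloring problem. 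The proposed repair --- Kempe swaps along $(a,b)$-paths in $H_2$ to toggle missing colors at path endpoints --- is the right kind of tool, but you give no argument that the (potentially exponentially many) boundary mismatches can be eliminated, that the swaps do not collide with precolored edges of $H_2$, or that the process terminates; and it is precisely in controlling these interactions that the exceptional configurations (C1)--(C4) would have to emerge. As it stands the proposal is a plausible strategy outline, not a proof, and the key lemma it would need (a gluing/compatibility statement for missing-color functions across a dimensional matching) is neither stated precisely nor established.
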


For $1 \leq k \leq d$, let
$\varphi$ be a proper precoloring
of $d-k$ edges of $Q_d$ and $\psi$ be a partial coloring of
$k$ edges in $Q_d$. Using the preceding theorem, we shall prove that
there is a proper $d$-edge coloring of $Q_d$ that agrees with $\varphi$ and
which avoids $\psi$ unless one of the following
conditions are satisfied:

\begin{itemize}

	\item[(D1)] there is a vertex $v$ such that every edge incident with $v$
	is either $\psi$-colored $c$, $\varphi$-colored by a color distinct from $c$,
	or not colored under $\varphi$ or $\psi$, but adjacent to an edge
	with color $c$ under $\varphi$; or
	
	\item[(D2)] exactly one edge $uv$ is colored under $\psi$
	and for every $i \in \{1,\dots,d\}\setminus \{\psi(uv)\}$ there is an
	edge incident with $u$ or $v$ that is colored $i$ under $\varphi$; or

	\item[(D3)] $d=2$ and two non-adjacent edges are colored by different
	colors under $\psi$, or there is one edge $e$ colored under $\varphi$
	and another edge $e'$ colored under $\psi$, such that $e$ and $e'$
	have different colors if they are adjacent, and the same color if they 
	are non-adjacent.

\end{itemize}

\begin{theorem}
\label{th:extavoid}
	Let $\varphi$ be a proper $d$-edge precoloring
	of $d-k$ edges of $Q_d$ and $\psi$ be a partial coloring of
	$k$ edges in $Q_d$, where $1 \leq k \leq d$.
	There is an extension of $\varphi$ that avoids $\psi$ unless
	some edge of $Q_d$ has the same color under $\varphi$ or $\psi$,
	or
	the colorings satisfy one of the conditions (D1)-(D3).	
\end{theorem}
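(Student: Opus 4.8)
The plan is to reduce the simultaneous extend-and-avoid problem to the pure extension problem handled by Theorem~\ref{extQ_d}. The natural strategy is to convert the avoidance constraint imposed by $\psi$ into additional precoloring constraints so that an extension of the combined coloring automatically avoids $\psi$. Concretely, for each edge $e$ that is $\psi$-colored, I want to force the extension to assign $e$ a color different from $\psi(e)$; and for each edge that is $\varphi$-colored, the extension must respect $\varphi$. Since $\varphi$ colors $d-k$ edges and $\psi$ colors $k$ edges, the total is $d$ constrained edges, which is exactly the threshold in Theorem~\ref{extQ_d}. The first step is therefore to set up a modified precoloring: keep $\varphi$ as is, and for each $\psi$-colored edge $e$ attempt to choose a color $c(e)\neq\psi(e)$ that, together with $\varphi$ and the already-chosen colors on other $\psi$-edges, forms a proper partial coloring of $d$ edges. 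If I can do this and the resulting precoloring lies outside $\mathcal{C}$, then Theorem~\ref{extQ_d} produces an extension, and by construction it agrees with $\varphi$ and avoids $\psi$.

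The core of the argument is then a case analysis translating the exceptional families $\mathcal{C}_1,\dots,\mathcal{C}_4$ of the extension theorem into the avoidance-exceptions (D1)--(D3). First I would dispose of the trivial obstruction: if some edge is colored the same under $\varphi$ and $\psi$, no extension can both agree with $\varphi$ and avoid $\psi$ on that edge, so this case is genuinely excluded. Assuming no such conflict, I would show that whenever the proposed precoloring is forced into one of the configurations (C1)--(C4)---or whenever no valid choice of replacement colors $c(e)$ exists at all---the original data $(\varphi,\psi)$ must satisfy one of (D1)--(D3). For instance, (D2) should correspond to the situation where $k=1$, a single $\psi$-edge $uv$ must receive a color avoiding $\psi(uv)$, but every other color is already blocked at $u$ or $v$ by $\varphi$; this is precisely a (C1)/(C2)-type blockage after we try to recolor $uv$. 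Condition (D1), where a vertex $v$ is saturated so that every incident edge is forced to repeat a color $c$, should map onto the (C2)/(C3) vertex-saturation obstructions. The low-dimensional exceptions in (D3) mirror the $d=2$ degeneracies and the $d=3$ dimensional-matching case (C4), which I would treat separately using Lemma~\ref{lem:Q2} and Proposition~\ref{prop:Q3}.

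The main obstacle I anticipate is the step of choosing the replacement colors $c(e)$ for the $\psi$-edges so that the combined precoloring is proper \emph{and} avoids landing in $\mathcal{C}$ gratuitously. When several $\psi$-edges are mutually adjacent or lie near $\varphi$-colored edges, the choices interact, and a greedy selection may fail even though a global valid assignment exists; conversely, a forced failure of the selection must be shown to reflect a genuine (D1)--(D3) obstruction rather than an artifact of a bad greedy order. I would handle this by a careful local analysis around each vertex incident to a constrained edge, counting available colors: a $\psi$-edge $e=uv$ has $d-1$ permissible replacement colors, and it is blocked only if all of them are excluded by adjacent $\varphi$-colors or by the properness requirement against other chosen replacements, and I must argue that total blockage of this kind forces (D1) or (D2). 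The delicate accounting is ensuring that the precoloring we feed to Theorem~\ref{extQ_d} is not spuriously in $\mathcal{C}$; here I would exploit the freedom to reassign the replacement colors, and show that if every admissible choice lands in $\mathcal{C}$, then the underlying structure is exactly one of the listed exceptions.

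Finally, for the converse direction I would verify that each configuration (D1)--(D3) really does obstruct a simultaneous extension-avoidance, so that the characterization is tight. This amounts to checking, in each of the three cases, that any proper $d$-edge coloring agreeing with $\varphi$ is forced to agree with $\psi$ on at least one edge---an elementary parity or counting argument at the relevant saturated vertex for (D1) and (D2), and a direct inspection of the small cases $d=2,3$ for (D3). I expect the forward direction (showing avoidability outside (D1)--(D3)) to carry the real weight, with the converse being routine.
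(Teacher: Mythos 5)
Your proposal follows essentially the same route as the paper: replace each $\psi$-colored edge by a color different from its $\psi$-color so as to obtain a proper precoloring of $d$ edges, invoke Theorem~\ref{extQ_d}, and then show that whenever every admissible replacement lands in one of the exceptional families $\mathcal{C}_1$--$\mathcal{C}_4$ (or no replacement exists at all), the pair $(\varphi,\psi)$ satisfies one of (D1)--(D3). The paper organizes the resulting case analysis by the adjacency structure of the $\psi$-colored edges (two non-adjacent, all pairwise adjacent at a common vertex, or exactly one), which is the concrete form of the ``local analysis'' you sketch, so the approaches coincide.
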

\begin{proof}
	If $Q_d$ contains altogether $d-1$ edges that are colored 
	under $\varphi$ and $\psi$
	(i.e. some edge is colored under both $\varphi$ and $\psi$),
	then since at most $d-1$ edges are colored, we can
	form a new partial proper edge coloring from $\varphi$ by greedily
	assigning some color from $\{1,\dots,d\} \setminus \psi(e)$ to any edge $e$
	that 
	is colored under $\psi$, but not colored under $\varphi$, so that the
	resulting coloring $\varphi'$ is proper. By	
	Theorem \ref{extQ_d}, $\varphi'$ is extendable, so there is an extension
	of $\varphi$ that avoids $\psi$.
	
	Now assume that altogether exactly $d$ edges are colored 
	under $\varphi$ and $\psi$, so no edge is colored
	under both $\varphi$ and $\psi$. Let $E_{\varphi,\psi}$
	be the set of edges in $E(Q_d)$ that are colored under
	$\varphi$ or $\psi$.
	The case when $d \leq 2$ is trivial, so assume that $d \geq 3$.
	We shall consider some different cases.

	Suppose first that there are two non-adjacent edges $e_1$ 
	and $e_2$ that are colored under $\psi$. Then we consider the 
	coloring $\varphi'$ obtained
	from $\varphi$ by in addition coloring every $\psi$-colored
	edge in such a way that the resulting precoloring is
	proper and avoids $\psi$; note that since $e_1$ and $e_2$ are non-adjacent,
	this is possible.
	At most $d$ edges
	are colored under the resulting coloring $\varphi'$, so if it
	is not extendable, then 
	$\varphi' \in \mathcal{C}$.
	
	If $\varphi' \in \mathcal{C}_1$, then
	there is an uncolored edge $uv$ in $Q_d$ such that
		$u$ is incident with edges of $r \leq d$ distinct colors under $\varphi'$ and
		$v$ is incident to $d-r$ edges $\varphi'$-colored with $d-r$
		other distinct colors. 
		Suppose without loss of generality that $e_1$ is incident with $u$, 
		$e_2$ is incident
		with $v$ and that at least two $\varphi'$-colored edges are incident with $u$.
		Then we can define a new edge coloring of $E_{\varphi,\psi}$ from $\varphi'$
		that avoids $\psi$
		by recoloring $e_2$ by some color that appears at $u$. The obtained 
		partial edge
		coloring is not in $\mathcal{C}$, and thus there is an extension of $\varphi$
		that avoids $\psi$.
	
	If $\varphi' \in \mathcal{C}_3 \cup \mathcal{C}_4$, 
	then since all edges in $E_{\varphi,\psi}$ 
	are non-adjacent, we can recolor the edges that are
	colored under both $\varphi'$ and $\psi$ to obtain a proper coloring
	of $E_{\varphi,\psi}$ that avoids $\psi$ and is extendable to a
	proper $d$-edge coloring. Hence, there is an extension of $\varphi$
	that avoids $\psi$.
	
	Suppose now that $\varphi' \in \mathcal{C}_2$. Since $e_1$ and $e_2$
	are non-adjacent, at least one of them is not adjacent to any other
	edge from $E_{\varphi,\psi}$. Thus, we may recolor this edge and a similar
	argument as in the preceding paragraph shows that there is an extension
	of $\varphi$ that avoids $\psi$.
	
	\bigskip

	Suppose now that there are at least two edges colored under $\psi$
	and that all such edges are pairwise adjacent. 
	Thus there is some
	vertex $v$ that is incident with every edge that is colored under $\psi$.
	If we cannot define a new proper coloring $\varphi'$ of $E_{\varphi,\psi}$ 
	from $\varphi$ by coloring
	the $\psi$-colored edges in such a way that $\varphi'$ avoids $\psi$, then  
	all $\psi$-colored edges are colored by
	the same color. Moreover, if there is no such coloring $\varphi'$, then all 
	$\varphi$-colored edges are incident with $v$
	and have colors that are distinct from the $\psi$-colored edges; 
	that is, (D1) holds.
	
	Let us now consider the case when we can define a coloring $\varphi'$ as described
	in the preceding paragraph. Then $\varphi'$ is extendable, unless 
	$\varphi' \in \mathcal{C}_1 \cup \mathcal{C}_2$.
	
	Suppose first that $\varphi' \in \mathcal{C}_1$. 
	Then, since all colors in $\{1,\dots,d\}$ appear on edges 
	under $\varphi'$, there must
	be some $\varphi$-colored edge incident with $u$; suppose that such an
	edge has color $c$ under $\varphi$. Then no edge incident with $v$
	is $\varphi'$-colored $c$, because $\varphi' \in \mathcal{C}_1$.
	Now, if there is such a color $c$, such that, in addition,
	some $\psi$-colored edge $e$,
	incident with $v$,  is not colored $c$ under $\psi$, 
	then we can, from $\varphi'$, define a new coloring $\varphi''$
	of $E_{\varphi,\psi}$  by recoloring $e$ by the color $c$. 
	Since $\varphi'' \notin \mathcal{C}$, it is extendable. In conclusion,
	there
	is an extension of $\varphi$ that avoids $\psi$ unless (D1) holds.
	A similar argument applies if $\varphi' \in \mathcal{C}_2$.

	\bigskip
	
	It remains to consider the case when exactly one edge $e=uv$ is colored
	under $\psi$. 
	If we cannot pick some color for $e$ that is distinct from $\psi(e)$ and
	satisfies that this coloring of $e$ taken together with $\varphi$ is proper,
	then $\varphi$ and $\psi$ satisfy (D2).
	On the other hand, if we can define such a coloring $\varphi'$ of 
	$E_{\varphi,\psi}$ 
	from $\varphi$, which avoids $\psi$, then there is an extension of $\varphi$
	that avoids $\psi$ unless $\varphi' \in 
	\mathcal{C}$.
	
	If $\varphi' \in \mathcal{C}_3$ or $\varphi' \in \mathcal{C}_4$, then
	since all $\varphi'$-colored edges are pairwise non-adjacent, we can define
	a new proper coloring of $E_{\varphi,\psi}$ from $\varphi'$ that is extendable,
	and which avoids $\psi$.
	
	If $\varphi' \in \mathcal{C}_1$, then we can similarly define a new proper
	coloring $\varphi''$ of $E_{\varphi,\psi}$ that is extendable, unless
	exactly one $\varphi$-colored edge $e'$ is not adjacent to $e$ and
	$\varphi(e') = \psi(e)$; that is, (D1) holds.
	
	Finally, if $\varphi'$ satisfies (C2), then a similar argument shows
	that we can define a new extendable partial edge coloring of $E_{\varphi,\psi}$
	that avoids $\psi$, unless $\varphi$ and $\psi$ satisfy (D1).
\end{proof}


\begin{thebibliography}{99}
\addcontentsline{toc}{chapter}{Bibliography}


\bibitem{AndersenHilton}
L. D. Andersen, A. J. W. Hilton,
\emph{Thank Evans!,
Proc. London Math. Soc.} 47 (1983), pp. 507--522.


\bibitem{AndrenEven}
L. J. Andr\'en,
\emph{On Latin squares and avoidable arrays},
Doctoral thesis, Ume\aa\enskip University, 2010.

\bibitem{AndrenCasselgrenMarkstrom}
L.J. Andr\'{e}n, C.J Casselgren, K. Markstr\"om,
Restricted completion of sparse Latin squares, 
{\em Combinatorics, Probability and Computing} 28 (2019), 675--695.


\bibitem{AndrenOdd}
L. J. Andr\'en, C. J. Casselgren, L.-D. \"Ohman,
Avoiding arrays of odd order by Latin squares,
{\em Combinatorics, Probability and Computing} 22 (2013), 184--212.








\bibitem{Casselgren}
C. J. Casselgren,
On avoiding some families of arrays,
{\em Discrete Mathematics}
312 (2012), 963--972. 




\bibitem{CasselgrenMarkstromPham}
C.J. Casselgren, K. Markstr\"om, L.A. Pham
Edge precoloring extension of hypercubes, to appear in
{\em Journal of Graph Theory}.

\bibitem{CasselgrenMarkstromPham2}
C.J. Casselgren, K. Markstr\"om, L.A. Pham
Restricted extension of sparse partial edge colorings of hypercubes,
to appear in {\em Discrete Mathematics}.

\bibitem{CasselgrenPham}
C.J. Casselgren, L.A. Pham,
Restricted extension of sparse partial edge colorings of complete graphs,
submitted, available on Arxiv.


\bibitem{Cavenagh}
N. Cavenagh,
Avoidable partial latin squares of order 4m+1,
\emph{Ars Combinatoria} 95 (2010), pp. 257--275.




\bibitem{ChetwyndRhodes}
A.G. Chetwynd, S.J. Rhodes,
Avoiding partial Latin squares and intricacy
\emph{Discrete Mathematics} 177 (1997), pp. 17--32.





\bibitem{Colbourn}
C.J. Colbourn,
The complexity of completing partial Latin squares,
\emph{Discrete Applied Mathematics} 8 (1984), pp. 25--30.






\bibitem{EGHKPS}
K. Edwards, A. Girao, J. van den Heuvel, R.J. Kang, G.J. Puleo,  
J.-S. Sereni,
Extension from Precoloured Sets of Edges,
{\em Electronic Journal of Combinatorics} 25 (2018), P3.1, 28 pp.

\bibitem{Evans}
T. Evans, Embedding incomplete latin squares,
\emph{American Mathematical Monthly} 67 (1960),
958--961.


\bibitem{Fiala}
J. Fiala,
NP-completeness of the edge precoloring extension 
problem on bipartite graphs,
\emph{Journal of Graph Theory} 43 (2003), pp. 156--160.

\bibitem{Galvin}
F. Galvin,
The list chromatic index of bipartite multigraphs
{\em J. Combin. Theory Ser. B} 63 (1995), 153--158.

\bibitem{GiraoKang}
A. Girao, R.J. Kang,
Precolouring extension of Vizing’s theorem,
{\em Journal of Graph Theory (in press)}


\bibitem{Haggkvist}
R. H\"aggkvist,
A note on Latin squares with restricted support,
\emph{Discrete Mathematics} 75 (1989), pp. 253--254.

\bibitem{Haggkvist78}
R. H\"aggkvist, 
A solution of the {E}vans conjecture for {L}atin squares of large size
{\em Combinatorics ({P}roc. {F}ifth {H}ungarian {C}olloq.,
              {K}eszthely, 1976), {V}ol. {I}},
    Colloq. Math. Soc. J\'anos Bolyai,
    18,
     495--513.
		
		\bibitem{Harary}
F. Harary,
A Survey of the theory of hypercube graphs,
{\em Comput. Math. Applic.} Vol 15, No. 4, pp. 277--289, 1988.
		
		\bibitem{HavelMoravek}
I. Havel and J. Moravek, 
B-valuations of graphs. 
{\em Czech. Math. J.} 22, 338-352 (1972). 

\bibitem{Johansson}
P. Johansson,
Avoiding edge colorings of hypercubes,
Bachelor thesis, Link\"oping University, 2019.

\bibitem{MarkstromOhman}
K. Markstr\"om, L.-D. \"Ohman,
Unavoidable arrays,
\emph{Contributions to Discrete Mathematics} (2009), 90--106.

\bibitem{MarcotteSeymour}
O. Marcotte, P. Seymour,
Extending an edge coloring
{\em Journal of Graph Theory}
14 (1990), 565-–573.

\bibitem{Smetaniuk}
B. Smetaniuk,
A new construction for Latin squares I. Proof of the Evans conjecture,
\emph{Ars Combinatoria} 11 (1981), 155--172.



\bibitem{Ohman}
 L.-D. \"Ohman,
Partial latin squares are avoidable,
\emph{Annals of Combinatorics} 15 (2011), 485--497.





\end{thebibliography}
\end{document}